\renewcommand\eqref[1]{(\ref{#1})}
\title{Non-rational centers of log canonical singularities}
\author{Valery Alexeev}
\address{Department of Mathematics, University of Georgia, Athens, GA 30605, USA}
\email{valery@math.uga.edu}
\author{Christopher D. Hacon}
\address{Department of Mathematics, University of Utah, 155 South 1400 East,
Salt Lake City, UT 48112-0090, USA}
\email{hacon@math.utah.edu}
\begin{document}
\begin{abstract}
  We show that if $(X,B)$ is a log canonical pair with $\dim X\geq
  d+2$, whose non-klt centers have dimension $\geq d$, 
  then $X$ is has depth $\ge d+2$ at every closed point.
\end{abstract}

\maketitle
\tableofcontents

\section{Introduction}

The main classes of singularities in the Minimal Model Program are:
terminal, canonical, log terminal, and log canonical. It is well known
that the singularities in the first three classes are Cohen-Macaulay
and rational, and in the last class they are neither, in general. The
main aim of this paper is to establish several general tools for measuring how
far a particular log canonical singularity is from being CM
(Cohen-Macaulay) or rational. (We note that \cite{KK10} takes a
different approach, and proves that all log canonical
singularities are Du Bois.)  

Let $X$ be an algebraic variety over an algebraically closed field of
characteristic zero and let $f:Y\to X$ be a resolution of
singularities. It is well known that the sheaves $R^if_*\cO_Y$ 
are coherent on $X$ and do not depend on the choice of the resolution. We make
the following definition.

\begin{definition}
  Let $X$ be a normal algebraic variety.
  The \defn{centers of non-rational singularities} of $X$
  (or simply \defn{non-rational centers}) are 
the subvarieties $Z_i$ defined by the
  associated primes of the sheaves $R^if_*\cO_Y$, $i>0$. 
\end{definition}

Thus, each non-rational center $Z_i$ is an irreducible subvariety of
$X$, by definition.

Let $(X,B)$ be a klt pair (see Section~\ref{sec:preliminaries} below
for all standard definitions). Then $R^if_*\cO_Y=0$ for $i>0$ and $X$
has rational singularities, hence $X$ is CM.  As mentioned above, log
canonical does not imply rational or CM: the simplest example is
provided by the cone over an abelian surface in which case
$R^1f_*\cO_Y$ is supported at the vertex and $X$ is also not CM.  Our
first main theorem is the following:

\begin{theorem}\label{thm:centers}
  Let $(X,B)$ be a log canonical pair. Then every non-rational center
  of $X$ is a non-klt center of $(X,B)$.
\end{theorem}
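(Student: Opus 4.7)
The plan is to reduce the problem to a $\mathbb{Q}$-factorial dlt modification via Leray, apply relative Kawamata-Viehweg vanishing to pass from $\cO_{X'}$ to $\cO_{\lfloor B'\rfloor}$, and then analyze the latter by a Mayer-Vietoris argument combined with induction on dimension.

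To begin, by BCHM take a $\mathbb{Q}$-factorial dlt modification $g\colon(X',B')\to(X,B)$, so $g$ is birational, $(X',B')$ is dlt, and $K_{X'}+B'=g^*(K_X+B)$. Because $(X',B')$ is $\mathbb{Q}$-factorial dlt, one has $a(E,X',0)\ge a(E,X',B')>-1$ for every exceptional $E$, so $(X',0)$ is klt, and by Elkik's theorem $X'$ has rational singularities. Taking a log resolution $h\colon Y\to X'$ and setting $f=g\circ h$, the Leray spectral sequence yields $R^if_*\cO_Y\cong R^ig_*\cO_{X'}$. Since the non-rational centers are independent of the chosen resolution, it suffices to identify the associated primes of $R^ig_*\cO_{X'}$ for $i>0$.

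For that, apply $Rg_*$ to the short exact sequence $0\to\cO_{X'}(-\lfloor B'\rfloor)\to\cO_{X'}\to\cO_{\lfloor B'\rfloor}\to 0$. Since $(X',\{B'\})$ is klt and
\[-\lfloor B'\rfloor-(K_{X'}+\{B'\})=-(K_{X'}+B')=-g^*(K_X+B)\]
is $g$-numerically trivial, relative Kollár/Kawamata-Viehweg vanishing gives $R^ig_*\cO_{X'}(-\lfloor B'\rfloor)=0$ for $i>0$. Hence $R^ig_*\cO_{X'}\cong R^ig_*\cO_{\lfloor B'\rfloor}$ for $i\ge 1$. The scheme $\lfloor B'\rfloor=\sum_jD_j$ is snc in a neighborhood of itself, and its strata $D_I=\bigcap_{j\in I}D_j$ are exactly the lc centers of $(X',B')$ lying above the non-klt centers of $(X,B)$. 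I would then analyze $R^ig_*\cO_{\lfloor B'\rfloor}$ through the Mayer-Vietoris complex
\[0\to\cO_{\lfloor B'\rfloor}\to\bigoplus_j\cO_{D_j}\to\bigoplus_{|I|=2}\cO_{D_I}\to\cdots\]
together with induction on $\dim X$: each $D_I$ inherits via repeated adjunction a dlt pair structure of lower dimension whose non-klt centers are the further strata, so the inductive hypothesis controls the associated primes of each $R^ig_*\cO_{D_I}$.

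I expect the main obstacle to be this final spectral-sequence step. An associated prime of the target can appear as the kernel or cokernel of a differential and need not be an associated prime of any individual $E_2$-term, so the inductive hypothesis must supply a torsion-freeness statement strong enough to prevent the differentials from creating new associated primes outside the union of non-klt centers. A clean formulation likely requires upgrading the inductive claim beyond Theorem~\ref{thm:centers} itself---for instance, to a statement about the associated primes of $R^ig_*\cO_W$ for any union $W$ of lc centers equipped with its natural quasi-log canonical structure.
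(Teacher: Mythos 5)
Your reduction is sound as far as it goes: the dlt modification exists, $X'$ has rational singularities, so $R^if_*\mathcal{O}_Y\cong R^ig_*\mathcal{O}_{X'}$, and since $-\lfloor B'\rfloor$ is $\mathbb{Q}$-Cartier ($X'$ is $\mathbb{Q}$-factorial) and $(X',\{B'\})$ is klt, relative Kawamata--Viehweg vanishing does give $R^ig_*\mathcal{O}_{X'}\cong R^ig_*\mathcal{O}_{\lfloor B'\rfloor}$ for $i>0$. (Two small inaccuracies: a dlt pair can have exceptional divisors of discrepancy exactly $-1$ --- blow up a stratum of $\lfloor B'\rfloor$ inside the snc locus --- the correct statement is that $(X',(1-\epsilon)B')$ is klt; and $\lfloor B'\rfloor$ need \emph{not} be snc in a neighborhood of itself, dlt only gives snc on an open set meeting every stratum, so exactness of your Mayer--Vietoris complex and the good scheme structure of the strata are themselves nontrivial facts about dlt pairs.) The genuine gap is the final step, and it is exactly the one you flag but do not close: controlling the associated primes of $R^ig_*\mathcal{O}_{\lfloor B'\rfloor}$. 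Induction on dimension via adjunction cannot do this, because the restriction $g|_{D_I}\colon D_I\to g(D_I)$ is in general a fiber space onto a non-klt center of $(X,B)$, not a birational morphism: for the cone over an abelian surface the whole boundary of the dlt model contracts to the vertex, and $R^1g_*\mathcal{O}_{\lfloor B'\rfloor}$ is a skyscraper there. So the inductive hypothesis --- a statement about resolutions of lower-dimensional lc pairs --- says nothing about the sheaves $R^ig_*\mathcal{O}_{D_I}$, nor about the kernels and cokernels of the Mayer--Vietoris differentials, where new associated primes could a priori appear. What is needed is precisely a torsion-freeness theorem for higher direct images under non-birational morphisms from (unions of) strata, i.e.\ the Ambro--Fujino theorem \ref{t-tor}(1) (equivalently the qlc formalism), which rests on Koll\'ar-type Hodge-theoretic arguments and is not a consequence of the statement you are proving in lower dimension.

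For comparison, the paper does not pass to the dlt model at all for this theorem: it works on a log resolution $f\colon Y\to X$ with $K_Y+E-A+\Delta=f^*(K_X+B)$, proves $R^if_*\mathcal{O}_Y\cong R^if_*\mathcal{O}_Y(A)$ by the ``get rid of the $A$'' trick (Theorem~\ref{t-res}, Corollary~\ref{c-res1}), and then, since $A-(K_Y+E+\Delta)\sim_{\mathbb{Q}}-f^*(K_X+B)$ is semiample over $X$, applies Theorem~\ref{t-tor}(1) in the form of Corollary~\ref{cor:assoc-primes} to conclude that every associated prime of $R^if_*\mathcal{O}_Y(A)$ is the generic point of the image of a stratum of $(Y,E+\Delta)$, i.e.\ of a non-klt center. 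If you are willing to import Theorem~\ref{t-tor}(1), your route can be completed (indeed it can then be applied directly upstairs, making the dlt model and the Mayer--Vietoris induction unnecessary); without that input, the argument as proposed does not prove the theorem.
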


\begin{remark} A similar result was independently proven in
  \cite{Kov11}. Note that in \cite{Kov11} the terminology "irrational
  centers" is used instead of non-rational centers.
\end{remark}
Note that the the closed set of non-rational
singularities is a subset of the closed set of non-klt singularities,
but \eqref{thm:centers} is far from being obvious.

It is natural to assume that the failure of log canonical pairs to be
CM can be described in terms of the non-rational centers. There are
several ways to measure this failure.  A variety is CM if and only if it
satisfies Serre's condition $S_{\dim X}$ or, equivalently, if and only if it
is $S_{\dim X}$ \emph{at every closed point.} Thus, the conditions
$S_n$ 
generalize the CM property. But there is another
logical generalization:

\begin{definition}\label{def:condition-C}
  We say that a coherent sheaf $F$ on a scheme $X$ satisfies condition
  $C_n$ (or simply is $C_n$) if for every \emph{closed point}
  $x\in\Supp F$ one has $\depth F_x\ge n$. (Here, ``C'' stands for a
  ``closed point''.) If $F=0$ then we say that $F$ is $C_n$ for all~$n$.
  We say that $X$ is $C_n$ if so is $\cO_X$.
\end{definition}

It turns out that for projective varieties the $C_n$ condition is
frequently easier to work with than the $S_n$ condition because it
admits a simple cohomological criterion, see
Lemma~\ref{lem:Hartshorne}.  Our second main result is the following
theorem, generalizing the $C_3$ case contained in \cite[3.1 and
3.2]{AlexeevLimits}. (Recently, O. Fujino communicated to us another
proof of the $C_3$ case \cite[4.21 and 4.27]{Fujino09}).

\begin{theorem}\label{thm:main}
  Let $X$ be a normal variety of $\dim X\ge d+2$.  Assume
  that the pair $(X,B)$ is log canonical and that every non-klt 
  center of $(X,B)$ 
has dimension $\ge d$. Then
  \begin{enumerate}
  \item For each $i>0$, the sheaf $R^i f_*\cO_Y$ is $C_{d+1-i}$.
  \item $X$ is $C_{d+2}$.
  \end{enumerate}
\end{theorem}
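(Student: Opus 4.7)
The plan is to proceed by joint induction on $d$ for both (1) and (2), checking the assertion at each closed point $x\in X$ by cutting with a general Cartier divisor $H$ through $x$ and applying the inductive hypothesis to the restricted pair. The base case $d=0$ is immediate: $X$ is normal of dimension $\geq 2$ and hence $S_2$, so it is $C_2$, while (1) imposes no content ($C_{1-i}$ is vacuous for $i\geq 1$).

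For the inductive step, fix a closed point $x\in X$. If $x$ lies on no non-klt center of $(X,B)$ then, by Theorem~\ref{thm:centers}, the sheaves $R^if_*\cO_Y$ are unsupported at $x$ and $(X,B)$ is klt near $x$, so both conclusions hold trivially; assume therefore that $x$ lies on some non-klt center. Compactifying $X$ if necessary and choosing a sufficiently ample line bundle $L$, I would pick a general $H\in|L\otimes\mathfrak{m}_x|$ together with a log resolution $f\colon Y\to X$ of $(X,B+H)$, so that the strict transform $\tilde H\subset Y$ is smooth and the restriction $\tilde f:=f|_{\tilde H}\colon\tilde H\to H$ is a log resolution of $(H,B|_H)$. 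A Bertini-type statement for lc pairs ensures that $(X,B+H)$ is lc; adjunction then gives $(H,B|_H)$ lc with non-klt centers of dimension $\geq d-1$ (the proper intersections with $H$ of the non-klt centers of $(X,B)$). The inductive hypothesis applied to $(H,B|_H)$ yields: (1') $R^i\tilde f_*\cO_{\tilde H}$ is $C_{d-i}$ for $i>0$, and (2') $H$ is $C_{d+1}$.

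By Theorem~\ref{thm:centers}, the associated primes of each $R^if_*\cO_Y$ are non-klt centers of dimension $\geq d\geq 1$, so a general $H$ avoids them; consequently a local equation $t$ of $H$ at $x$ is a non-zero-divisor on every $R^if_*\cO_Y$ near $x$, and also on $\cO_X$ by normality. Applying $Rf_*$ to
$$0\to\cO_Y(-f^*H)\to\cO_Y\to\cO_{f^*H}\to 0,$$
using the projection formula to identify $R^if_*\cO_Y(-f^*H)$ with $\cO_X(-H)\otimes R^if_*\cO_Y$, and invoking the non-zero-divisor property to break the long exact sequence, produces short exact sequences
$$0\to\cO_X(-H)\otimes R^if_*\cO_Y\to R^if_*\cO_Y\to R^if_*\cO_{f^*H}\to 0$$
in a neighbourhood of $x$, exhibiting $R^if_*\cO_{f^*H}$ as $(R^if_*\cO_Y)/t\cdot(R^if_*\cO_Y)$. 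The standard formula $\depth M=\depth(M/tM)+1$ for an $M$-regular $t$ then upgrades depth bounds on this restriction to depth bounds on $R^if_*\cO_Y$ itself, and applied to $\cO_X$ it upgrades (2') to $\depth_x\cO_X\geq d+2$, giving (2).

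The main obstacle is identifying $R^if_*\cO_{f^*H}$ with $R^i\tilde f_*\cO_{\tilde H}$ so that the inductive estimate (1') can be applied to the restriction. Because $f^*H=\tilde H+\sum a_jE_j$ picks up $f$-exceptional components, the scheme $f^*H$ is typically singular along $f^{-1}(x)$, and the two sheaves are a priori distinct on $H$. Establishing that they agree near $x$ (or at least that $R^if_*\cO_{f^*H}$ satisfies the same depth bound) requires a relative vanishing of Kollár--Fujino type for the $f$-exceptional part of $f^*H$, and uses the control of associated primes from Theorem~\ref{thm:centers} in an essential way. This is the technical heart of the argument.
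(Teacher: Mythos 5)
Your proposal has a genuine gap, and it is the one you yourself flag at the end: the identification of $R^if_*\cO_{f^*H}$ (or at least a depth bound for it) with $R^i\tilde f_*\cO_{\tilde H}$ is not an auxiliary technicality but is essentially the whole difficulty, and nothing in your argument supplies it. Since $f^*H=\tilde H+\sum a_jE_j$ is non-reduced along the exceptional locus over $x$, the two sheaves genuinely differ, and controlling the discrepancy requires exactly the kind of torsion-freeness/vanishing machinery for embedded normal crossings pairs (Theorem~\eqref{t-tor}, the ``get rid of the $A$'' results \eqref{t-res} and \eqref{thm:remove-A-always}, and ultimately the Hodge-theoretic input behind Lemma~\eqref{l-3}) that constitutes the actual proof. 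There is a second, independent gap earlier in the argument: the Bertini--adjunction step fails precisely in the only interesting case. You reduce to the case where $x$ lies on a non-klt center and then choose $H$ general in $|L\otimes\mathfrak m_x|$; but forcing $H$ through a point of the non-klt locus can destroy log canonicity of $(X,B+H)$ at $x$ (already for a nodal boundary curve on a smooth surface, $B|_H$ acquires a coefficient $2$ point at $x$), and even when $(H,B|_H)$ happens to be lc, new non-klt centers of dimension $<d-1$ (e.g.\ $x$ itself) can appear at $x$. So the inductive hypothesis ``lc with non-klt centers of dimension $\ge d-1$'' cannot be invoked for $(H,B|_H)$ as stated; the cut tends to destroy exactly the hypothesis your induction needs, at exactly the point where you need the depth estimate. (The regular-element bookkeeping --- avoiding associated primes via Theorem~\eqref{thm:centers}, splitting the long exact sequence, and the $\depth M=\depth(M/tM)+1$ upgrade --- is fine, as is the klt case and the $d=0$ base case; also note that your compactification step silently uses the nontrivial existence of lc closures.)

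For comparison, the paper does not cut by hyperplanes at all. It reduces to $X$ projective with $K_X+B$ Cartier (via [HX11] and a cyclic cover), replaces $\cO_Y$ by $\cO_Y(A)$ using \eqref{thm:remove-A-always}, and then proves the global vanishing $H^p(R^qf_*\cO_Y(-sL))=0$ for $q>0$, $p+q\le d$, $s\gg0$ by filtering from $R^qf_*\cO_Y(A-E)$ (which vanishes by Kawamata--Viehweg) up to $R^qf_*\cO_Y(A)$ through the level pieces $E_{=k}$ of the prepared resolution of Proposition~\eqref{c-res}; the vanishing for each graded piece is Lemma~\eqref{l-3}, proved with Ambro--Fujino torsion-freeness and Koll\'ar/Kawamata results on variations of (mixed) Hodge structures. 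The conclusion then follows from the cohomological characterization of $C_n$ (Lemma~\eqref{lem:Hartshorne}) and the Leray argument of Lemma~\eqref{lem:leray}. If you want to salvage a cutting argument, it would have to cut at general points of positive-dimensional centers (as in the $S_n$-versus-$C_n$ discussion in Section~\ref{sec:cm-sn-cn}), not through a prescribed closed point.
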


\begin{remark}It would be interesting to know if a similar result is true when
``non-klt centers'' are replaced by ``non-rational centers''.\end{remark}

\begin{acknowledgments}
  The first author was partially supported by NSF research grant DMS
  0901309, the second author was partially supported by NSF research
  grant DMS 0757897. Some of the work on this paper was done at MSRI
  and we would like to thank MSRI for the hospitality. We also thank
  Florin Ambro, Yujiro Kawamata and Karl Schwede for some useful
  discussions. We also thank the referee for helpful suggestions.
\end{acknowledgments}

\section{Preliminaries}
\label{sec:preliminaries}

We work over the field of complex numbers $\bC$.

\subsection{Singularities of the MMP and non-klt centers}
\label{sec:klt-lc}
 Let $X$ be a normal
variety. A {\bf boundary} is a $\bQ$-divisor $B=\sum b_iB_i$ on $X$
such that $0\le b_i\le 1$. 
If $K_X+B$ is $\bQ$-Cartier, then $(X,B)$ is a {\bf log pair}.  A {\bf
  log resolution} of a log pair $(X,B)$ is a projective birational
morphism $f:Y\to X$ such that $Y$ is smooth and ${\rm Exc}(f)\cup
f^{-1}(B)$ is a divisor with simple normal crossings.  We may then
uniquely write $$K_Y+f^{-1}_*B\equiv f^*(K_X+B)+\sum a_E(X,B)E$$ where
$E\subset Y$ are all the $f$-exceptional divisors.  The numbers
$a_E(X,B)$ are the {\bf discrepancies} of $(X,B)$ along $E$.  They do
not depend on the choice of a log resolution $f$.  A pair $(X,B)$ is
\begin{enumerate}
\item {\bf log canonical}, abbreviated {\bf lc}, if $b_i\leq 1$ and
  $a_E(X,B)\geq -1$, 
\item {\bf kawamata log terminal (klt)} if $b_i< 1$ and $a_E(X,B)>
  -1$ 
\end{enumerate}
for some (or equivalently for all) log resolution.

A {\bf non-klt place} is a component of $\rdown B .$ or a divisor of
discrepancy $a_E(X,B)\leq -1$. A {\bf non-klt center} is the image in $X$ of
a non-klt place (note that often in the literature non-klt places and
centers are called lc places and centers).  

A pair $(X,B)$ such that $b_i\leq 1$ is {\bf divisorially log
  terminal} (dlt for short) if there is a log resolution $f:Y\to X$
such that $a_E(X,B)> -1$ for any $f$-exceptional divisor $E\subset Y$.
Equivalently, by \cite{Szabo} $(X,B)$ is dlt if there is an open
subset $U\subset X$ such that $U$ is smooth, $B|_U$ has simple normal
crossings and $a_E(X,B)> -1$ for any divisor $E$ over $X$ with center
contained in $X-U$.

\subsection{CM, $S_n$, and $C_n$}
\label{sec:cm-sn-cn}

\begin{definition}
  A coherent sheaf $F$ on a Noetherian scheme $X$ is Cohen-Macaulay if
  for every scheme point $x\in \Supp F$ one has $\depth F_x = \dim_x
  \Supp F$. The sheaf $F$ satisfies Serre's condition $S_n$ if one has
  $\depth F_x\ge \min(n, \dim_x \Supp F)$.
\end{definition}

Note that some authors (e.g. \cite{KollarMoriBook}), define $S_n$ by
the condition $\depth F_x\ge \min(n, \dim_x X)$. We follow \cite[5.7.1
and 5.7.2]{EGA4}. This should not lead to confusion in the
settings of this paper.

Compare the condition $S_n$ with our condition $C_n$. One obvious
difference is that in Definition~\ref{def:condition-C} we did not ask
for $\min(n,\dim_x\Supp F)$. Hence, a CM variety satisfies $S_{\dim X+1}$
but not $C_{\dim X+1}$. A more subtle difference is provided by the
following example.

\begin{example}
  Let $Y$ be the cone over an abelian surface and let
  $X=Y\times\bP^1$. Then $X$ is not $S_3$ at the generic point of
  $Z=\rm{(vertex)}\times\bP^1$.  However, $X$ is $S_3$ at every closed
  point $x\in X$: $\cO_{X,x}$ is $S_3$ if and only if the hyperplane section
  $\cO_{Y,x}$ is $S_2$, which is true since $Y$ is normal.  Thus $X$
  is $C_3$ but not $S_3$.
\end{example}

Thus, assuming $n\le\dim X$, the property $S_n$ is stronger than
$C_n$. On the other hand, knowing that a certain class of varieties
satisfies $C_n$ allows to conclude the property $S_n$ for this class
indirectly, as follows.  (See e.g.  \cite{AlexeevLimits} for an
application of this principle.)  If $X$ is not $S_n$ at the generic
point of a subvariety $Z$ then a general hyperplane section $H$ is not
$S_n$ at the generic points of $Z\cap H$. Cutting down this way, we
get to a variety which is not $C_n$, and this process preserves other
nice properties of $X$, such as being log canonical.

The reason why the condition $C_n$ is so convenient to work with for
projective varieties is the following simple cohomological criterion:

\begin{lemma}\label{lem:Hartshorne}
  Let $F$ be a coherent sheaf on a projective scheme $X$ with an ample
  invertible sheaf $L$, over an algebraically closed field.  Then $F$
  is $C_n$ if and only if $H^i(X, F(-sL))=0$ for any $i<n$ and $s\gg0$.
\end{lemma}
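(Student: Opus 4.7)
The plan is to translate the pointwise depth condition $C_n$ into the vanishing of certain Ext sheaves via local duality, and then pass to the cohomological statement using Serre duality and Serre vanishing after embedding $X$ into projective space.

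First I would reduce to the case where $L$ is the hyperplane bundle under some embedding $X\hookrightarrow\bP^N$ (replacing $L$ by a positive power only rescales the variable $s$), and push $F$ forward to $\bP^N$; this changes neither the depth of $F$ at closed points nor the cohomology groups appearing in the statement.

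Next, at a closed point $x\in\bP^N$ the local ring $\cO_{\bP^N,x}$ is regular of dimension $N$, so local duality together with the identity $\depth F_x=\min\{i:H^i_{\mathfrak{m}_x}(F_x)\neq 0\}$ yields
\[
\depth F_x \ge n \iff \mathcal{E}xt^k_{\cO_{\bP^N}}(F,\omega_{\bP^N})_x = 0 \text{ for all } k>N-n.
\]
Writing $\mathcal{G}^k:=\mathcal{E}xt^k(F,\omega_{\bP^N})$, these are coherent sheaves supported on $\Supp F$, so by projectivity $\mathcal{G}^k=0$ iff $\mathcal{G}^k_x=0$ for every closed point of $\Supp F$. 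Hence $F$ is $C_n$ precisely when $\mathcal{G}^k=0$ for every $k>N-n$.

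For the final step I would combine Serre vanishing and Serre duality. For $s\gg 0$ the higher cohomology of $\mathcal{G}^k(s)$ vanishes, so the local-to-global Ext spectral sequence degenerates to
\[
\operatorname{Ext}^k_{\bP^N}(F,\omega_{\bP^N}(s)) \cong H^0(\bP^N,\mathcal{G}^k(s)),
\]
which is zero for $s\gg 0$ iff $\mathcal{G}^k=0$, since a nonzero coherent sheaf has nonzero sections after a sufficient twist. Serre duality on $\bP^N$ then identifies this Ext group with $H^{N-k}(X,F(-sL))^\vee$, and the reindexing $i=N-k$ turns the condition $k>N-n$ into $i<n$, giving the stated equivalence. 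No single step is difficult; the only thing to watch is that $s$ is large enough to enforce all Serre-vanishing statements for the finitely many sheaves $\mathcal{G}^k$ simultaneously, which is bookkeeping rather than a genuine obstacle.
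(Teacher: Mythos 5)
Your proof is correct and follows essentially the same route as the paper's: embed $X$ into $\bP^N$ by a power of $L$, use Serre duality together with Serre vanishing (the local-to-global Ext comparison) to reduce the cohomological vanishing to the vanishing of the sheaves $\mathcal{E}xt^{N-i}_{\bP^N}(F,\omega_{\bP^N})$, and then check this stalkwise at closed points of $\Supp F$. The only cosmetic difference is the local step, where you invoke local duality and the local-cohomology characterization of depth, whereas the paper uses the projective-dimension criterion for Ext vanishing plus the Auslander--Buchsbaum formula over the regular local ring $\cO_{\bP^N,x}$; the two are equivalent.
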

\begin{proof}
  This is basically proved in
  \cite[III.7.6]{HartshorneAlgebraicGeometry}, although it is not
  stated there in this way. We give the proof here for clarity.

  We embed $X$ into $P=\bP^N$ by some power of $L$. 
  Then the cohomology group $H^i(X,F(-q))=H^i(P,F(-q))$ is dual to
  the group $\Ext^{N-i}_P(F,\omega_P(q))$ which equals
  $\Gamma(P,\itExt_P^{N-i}(F,\omega_P(q)))$ if $q\gg0$ (where
  $\omega_P=\cO_P(-N-1)$ is the dualizing sheaf). Thus,
  one has $H^i(X,F(-sL))=0$ for $i<n$ and $s\gg0$ if and only if the sheaf
  $\itExt_P^{N-i}(F,\omega_P)$ is zero for $i<n$ or, equivalently, if
  $\itExt_P^{N-i}(F,\cO_P)=0$ for $i<n$. 

  This sheaf is zero if and only if its stalks are zero at every closed point
  $x\in \Supp F$. Denote $A=\cO_{P,x}$ for short, it is a 
  regular local ring.  The stalk at $x$ is $\Ext^{N-i}_{A}(F_x,A)$ and it is zero
  for $i<n$ if and only if the projective dimension $\pd_A F_x\le N-n$ (see
  \cite[Ex.6.6]{HartshorneAlgebraicGeometry}). The latter is
  equivalent to $\depth F_x\ge n$ since $\pd_A F_x + \depth F_x =\dim A=N$
  (cf. \cite[6.12A]{HartshorneAlgebraicGeometry}).
\end{proof}

\begin{example}
  Let $Y$ be a projective klt variety such that
  $NK_Y \sim 0$ for some $N\in \bN$. Let $L$ be an ample invertible
  sheaf on $Y$. Then the cone $X=\Spec \oplus_{k\ge0} H^0(Y,L^k)$ is
  lc, and its vertex $P$ is the unique non-klt center. Indeed, for the
  blowup $f:X'\to X$ at $P$ which inserts an exceptional divisor $E=Y$,
  one has $K_{X'}+E=f^*K_X$, $(X',E)$ is dlt, and so $E$ is the only
  divisor with discrepancy $a_E(X,B)=-1$.

  Assuming $n\le \dim X$, the cone $X$ satisfies condition $C_n$ if and only if
  $H^i(\cO_Y)=0$ for $0<i\le n-2$. This follows from the interpretation
  of the depth in terms of the local cohomology groups $H_P^i(\cO_X) =
  \oplus_{k\in \bZ}H^i(\cO_Y(L^k))$ and the fact that 
  $H^i(\cO_Y(L^k))=0$ for $k\ne0$ by the Kawamata-Viehweg vanishing
  and Serre duality.

  In particular, if $Y$ is an abelian surface then $X$ is not $C_3$ at
  the vertex $P$, but if $Y$ is a K3 or Enriques surface then $Y$
  \emph{is} $C_3$. Since for $i>0$ one has  $R^i f_*\mathcal O_{X'} =
  H^i(\cO_Y)$, we see that $P$ is a non-rational center if $Y$ is
  abelian or K3, and there are no non-rational centers if $Y$ is an
  Enriques surface.
\end{example}

By a theorem of Kempf, a variety $X$ has rational singularities if and only if
$X$ is CM and $f_*\omega_Y=\omega_X$ (see e.g.
\cite[5.12]{KollarMoriBook}). We conclude this section
with the following result which we will need below.

\begin{theorem}\label{t-klt} Let $(X,B)$ be a dlt pair, then $X$
has rational singularities.
\end{theorem}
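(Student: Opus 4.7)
The plan is to prove the theorem by induction on $d=\dim X$, combined with a Bertini-style slicing. Since rational singularities are a local property of $X$, we may assume $X$ is quasi-projective; fix an ample Cartier divisor $A$ on $X$. The base case $d=1$ is immediate, as a normal curve is smooth.

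For the inductive step, assume $d\ge 2$ and the statement in dimension $d-1$. Pick a log resolution $f\colon Y\to X$ of $(X,B)$ for which every $f$-exceptional divisor $E$ satisfies $a_E(X,B)>-1$, which exists by the dlt hypothesis. It suffices to show $R^if_*\cO_Y=0$ for each $i>0$, a stalk-local statement (and the vanishing implies that the normal variety $X$ is CM as well, so rational).

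Fix a closed point $x\in X$. For $n\gg 0$, choose a general $H\in|nA|$ through $x$. A Bertini argument shows that $(X,B+H)$ is again dlt (any $f$-exceptional divisor $E$ has $\mathrm{mult}_E(f^*H)=0$ since $f(E)$ has codimension $\ge 2$, so $a_E(X,B+H)=a_E(X,B)>-1$), and hence by adjunction $(H,B|_H)$ is dlt of dimension $d-1$. Moreover, for $H$ general, the divisor $H_Y:=f^*H$ is smooth on $Y$ and SNC with the components of the log resolution, so $f|_{H_Y}\colon H_Y\to H$ is a log resolution of $(H,B|_H)$. By the inductive hypothesis, $R^i(f|_{H_Y})_*\cO_{H_Y}=0$ for all $i>0$.

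Applying $Rf_*$ to the short exact sequence
\[0\to \cO_Y(-H_Y)\to \cO_Y\to \cO_{H_Y}\to 0,\]
and using the projection formula $R^if_*\cO_Y(-H_Y)=R^if_*\cO_Y\otimes\cO_X(-H)$ together with the inductive vanishing of $R^if_*\cO_{H_Y}$, we extract a surjection
\[(R^if_*\cO_Y)\otimes\cO_X(-H)\twoheadrightarrow R^if_*\cO_Y\]
for every $i>0$. At the stalk at $x$, the map $\cO_X(-H)_x\hookrightarrow\cO_{X,x}$ is given by multiplication by a local equation of $H$, which lies in $\mathfrak{m}_{X,x}$; Nakayama's lemma applied to the coherent $\cO_{X,x}$-module $(R^if_*\cO_Y)_x$ thus forces $(R^if_*\cO_Y)_x=0$. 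Varying $x$ over closed points of $X$ gives $R^if_*\cO_Y=0$ for $i>0$, so $X$ has rational singularities.

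The main obstacle is the Bertini step: verifying that a general $H\in|nA|$ through $x$ gives $(H,B|_H)$ dlt and $H_Y$ SNC with the entire log resolution of $(X,B)$. This requires combining ordinary Bertini on the log smooth open locus supplied by Szabó with the discrepancy computation above over the complementary codimension-$\ge 2$ locus, and then invoking adjunction for dlt pairs to pass from $(X,B+H)$ to $(H,B|_H)$.
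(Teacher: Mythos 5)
The argument breaks at its central Bertini step, and the failure is not repairable as stated. For the Nakayama argument you must take $H$ \emph{through the fixed point} $x$, and then the claim ``$\operatorname{mult}_E(f^*H)=0$ since $f(E)$ has codimension $\ge 2$'' is false precisely for the divisors that matter: any $f$-exceptional $E$ with $f(E)=\{x\}$ (such $E$ exist whenever $x$ is singular) satisfies $f(E)\subset H$, hence $\operatorname{mult}_E(f^*H)\ge 1$, regardless of codimension. So $a_E(X,B+H)=a_E(X,B)-\operatorname{mult}_E(f^*H)$ can drop below $-1$, and neither $(X,B+H)$ nor $(H,B|_H)$ need be dlt: Bertini for pairs holds for general members of $|nA|$, not for general members constrained through a prescribed point. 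A concrete counterexample to the inductive hypothesis you invoke: $X=\mathbb{C}^n/\mathbb{Z}_m$ acting by scalars with $m>n$, $B=0$, $x$ the image of the origin. This is klt (hence dlt), the exceptional divisor $E$ of the weighted blowup has $a_E(X,0)=n/m-1$, while every Cartier divisor $H$ through $x$ has $\operatorname{mult}_E(f^*H)\ge 1$, so $a_E(X,H)\le n/m-2<-1$ and $(X,H)$ is not even lc at $x$; there is then no dlt pair structure on $(H,B|_H)$ to which induction could apply. For the same reason the auxiliary claim fails too: $x$ is a base point of the sublinear system of members of $|nA|$ through $x$, so Bertini only gives that $H_Y=f^*H$ is smooth and snc with the old boundary \emph{away from} $f^{-1}(x)$; over $x$, which is exactly where you need $(R^i(f|_{H_Y})_*\mathcal{O}_{H_Y})$ to vanish, $f|_{H_Y}$ need not be a log resolution.

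The gap is essential rather than cosmetic. If you drop the constraint $x\in H$, a general $H$ gives $\mathcal{O}_X(-H)_x\cong\mathcal{O}_{X,x}$ at any fixed $x\notin H$ and Nakayama says nothing; so hyperplane cutting can at best reduce the problem to a zero-dimensional potential non-rational locus, and isolated bad points (cones) are exactly the hard case. Moreover the dlt hypothesis enters your proof only through the flawed discrepancy computation, and the margin $a_E(X,B)+1>0$ can be arbitrarily small while cutting through $x$ costs at least $1$, so there is no room to exploit the hypothesis along these lines. Handling the isolated case requires a genuine vanishing-theorem input (local/relative Kawamata--Viehweg, Elkik-type arguments), which is how the proof the paper cites, \cite[5.22]{KollarMoriBook}, proceeds --- note the paper itself gives no argument beyond that citation, so any correct self-contained proof would have to reproduce that vanishing-theoretic content, which your proposal currently does not.
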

\begin{proof} \cite[5.22]{KollarMoriBook}.
\end{proof}

\subsection{Ambro's and Fujino's results on quasi log varieties}
\label{sec:qlc-varieties}

In this section we recall some definitions and results concerning
quasi-log-canonical pairs (qlc pairs for short).
\begin{definition} Let $Y\subset M$ be a simple normal crossings
  (reduced) divisor $Y$ on a smooth variety and $D$ be a divisor on
  $M$ whose support contains no components of $Y$ and such that $D+Y$
  has simple normal crossings support.  The pair $(Y,B=D|_Y)$ is a
  {\bf global embedded simple normal crossings pair}.  Let $\nu :Y^\nu
  \to Y$ be the normalization and $K_{Y^\nu}+\Theta =\nu ^*(K_Y+D)$.
  A {\bf stratum} of $(Y,B)$ is a component of $Y$ or the image of a
  non-klt center of $(Y^\nu , \Theta )$.
Thus, the strata of $(Y,B)$ are irreducible by definition.
  
\end{definition}

We have the
following Torsion-Freeness and Vanishing Theorems of F. Ambro cf.
\cite[2.47]{Fujino09}:

\begin{theorem}\label{t-tor} Let $(Y,B)$ be a global embedded simple normal crossings pair. Assume that $B$ is a boundary $\Q$-divisor $L$ is a Cartier divisor and $f:Y\to X$ is a proper morphism. Then
\begin{enumerate}
\item If $L-(K_Y+B)$ is semiample over $X$, then every non-zero local section
of $R^q f_* \OO _Y(L)$ contains in its support the image of some
stratum of $(Y,B)$.

\item If $\pi:X\to Z$ is a projective morphism and there is a
  $\bQ$-Cartier divisor $H$ on $X$ such that $f^*H\sim _\Q L-(K_Y+B)$,
  and such that $H$ is big and nef on the image of every stratum of
  $(Y,B)$, then $R^p\pi _*(R^q f_* \OO _Y(L))=0$ for any $p>0$ and
  $q\geq 0$.
\end{enumerate}
\end{theorem}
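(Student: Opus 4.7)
The plan is to prove both statements by induction on the number of irreducible components of $Y$, reducing to the smooth irreducible case where (1) is Koll\'ar's classical torsion-freeness theorem for $R^q f_* \mathcal{O}_Y(L)$ and (2) is the relative Kawamata--Viehweg vanishing theorem. In the base case $Y$ is smooth and irreducible, and the strata of $(Y, B)$ are $Y$ itself together with the intersections of components of $\lfloor B \rfloor$; the classical results then give exactly what is required.

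For the inductive step, pick a component $Y_1 \subset Y$ and let $Y_2$ be the union of the remaining components; both are embedded SNC pairs with strictly fewer components. The short exact sequence
$$0 \to \mathcal{O}_{Y_2}(-Y_1 \cap Y_2) \otimes L \to \mathcal{O}_Y(L) \to \mathcal{O}_{Y_1}(L) \to 0$$
gives a long exact sequence of higher direct images. The adjunction $(K_Y + B)|_{Y_1} = K_{Y_1} + (B + Y_2)|_{Y_1}$ for SNC divisors shows that $L|_{Y_1} - (K_{Y_1} + (B + Y_2)|_{Y_1})$ equals $(L - K_Y - B)|_{Y_1}$ and so remains semiample (respectively, pulls back from a big-and-nef $H$) over $X$; an analogous adjunction applies to $Y_2$ after accounting for the twist by $-Y_1 \cap Y_2$. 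Crucially, the strata of the sub-pairs are precisely the strata of $(Y, B)$ contained in $Y_i$, matching up via the normalization map.

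For (1), a nonzero local section of $R^q f_* \mathcal{O}_Y(L)$ whose support avoids the image of every stratum of $(Y, B)$ would restrict to a section of $R^q f_* \mathcal{O}_{Y_1}(L)$ and lift from a section of $R^q f_*(\mathcal{O}_{Y_2}(-Y_1 \cap Y_2) \otimes L)$, both of which avoid images of strata of the corresponding sub-pairs; by induction both vanish, a contradiction. For (2), applying the inductive vanishing to both ends of the long exact sequence forces the middle $R^p \pi_*(R^q f_* \mathcal{O}_Y(L))$ to vanish, using that $H$ is big and nef on images of all strata of $Y_1$ and $Y_2$ since each such stratum is itself a stratum of $Y$.

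The main obstacle will be the careful bookkeeping of strata and adjunction under the inductive splitting: one must verify that $(Y_1, (B + Y_2)|_{Y_1})$ genuinely fits the definition of a global embedded SNC pair, and that the identification of its strata with those of $(Y, B)$ contained in $Y_1$ is correct via adjunction through the normalization. The analogous analysis for $Y_2$ with the twist $-Y_1 \cap Y_2$ requires rewriting $\mathcal{O}_{Y_2}(-Y_1 \cap Y_2) \otimes L$ as $\mathcal{O}_{Y_2}(L')$ for a Cartier $L'$ whose difference with $K_{Y_2}$ plus the induced boundary still has the required positivity; this is where the SNC hypothesis on the total divisor $D + Y$ becomes essential, since otherwise the twist might fail to combine with the adjunction boundary to form a $\bQ$-boundary with the right coefficients.
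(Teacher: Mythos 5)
There is a genuine gap, and it is located precisely where you place the least weight: the base case. The paper does not prove this theorem at all --- it is quoted from Ambro \cite[3.2]{AmbroQLvars} and Fujino \cite[2.47]{Fujino09} --- and the reason is that even for $Y$ smooth and irreducible the statement is \emph{not} a consequence of the classical results you invoke. If $\lfloor B\rfloor\neq 0$, part (1) does not assert torsion-freeness (the sheaf $R^qf_*\mathcal{O}_Y(L)$ can very well have torsion supported on images of lc centers); the assertion that every associated prime is the image of a stratum is exactly Ambro's generalization, and all known proofs run through an injectivity theorem obtained from the $E_1$-degeneration of a Hodge--de Rham type spectral sequence for (mixed) Hodge structures, not through Koll\'ar's theorem plus covering tricks alone. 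Likewise, part (2) in the base case is a Koll\'ar-type vanishing for the \emph{higher} direct images $R^qf_*\mathcal{O}_Y(L)$, $q>0$, with $H$ only big and nef on images of strata; relative Kawamata--Viehweg vanishing says nothing about $R^p\pi_*(R^qf_*\mathcal{O}_Y(L))$ for $q>0$. So the engine your induction is supposed to reduce to is missing, and it is the hard part of the theorem.

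The inductive step also has two concrete problems. For (2), from the long exact sequence of higher direct images you cannot conclude vanishing of $R^p\pi_*$ of the middle terms from vanishing at the two ends: you must first split the long exact sequence into short exact sequences, i.e.\ show the connecting maps $R^qf_*\mathcal{O}_{Y_1}(L)\to R^{q+1}f_*(\mathcal{O}_{Y_2}(-Y_1\cap Y_2)\otimes L)$ vanish. In the paper's Theorem~\ref{thm:weak-vanishing} this is exactly the delicate point, and it is handled by cutting with a \emph{general} member of $|nM|$, so that the new strata have strictly smaller images and part (1) kills the connecting map; in your decomposition $Y=Y_1\cup Y_2$ the components of $Y_1\cap Y_2$ are themselves strata of $(Y,B)$, their images need not be smaller than (or avoid) images of strata of the sub-pairs, and no such argument is available. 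For (1), the lift of your section to $R^qf_*(\mathcal{O}_{Y_2}(-Y_1\cap Y_2)\otimes L)$ is only well defined modulo the image of the connecting map from $R^{q-1}f_*\mathcal{O}_{Y_1}(L)$, so its support can be strictly larger than that of the original section and may well contain images of strata; the induction hypothesis then gives no contradiction. Your bookkeeping of adjunction and strata (that $(K_Y+B)|_{Y_i}$ produces the right boundary and that strata match up) is fine; what is missing is the Hodge-theoretic input and the control of connecting homomorphisms, which is the actual content of Ambro's and Fujino's proofs.
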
\begin{proof}
\cite[3.2]{AmbroQLvars}, 
\cite[2.47]{Fujino09}.
\end{proof}

\begin{corollary}\label{cor:assoc-primes}
  Let $\OO_Y(L)$ be as in part (1) of the above Theorem \ref{t-tor}.
  Then for any $q\ge0$, each associated prime of the sheaf $R^q f_* \OO
  _Y(L)$ is the generic point of the image of a stratum of $(Y,B)$.

\end{corollary}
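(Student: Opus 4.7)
The plan is to translate the statement about associated primes into one about supports of local sections, and then apply Theorem~\ref{t-tor}(1) to a carefully chosen local section.

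Recall that $P\in X$ is an associated prime of a coherent sheaf $\mathcal{F}$ exactly when $P$ is the generic point of the support of some local section of $\mathcal{F}$. So I fix an associated prime $P$ of $\mathcal{F}=R^q f_*\cO_Y(L)$, let $Z=\overline{\{P\}}$, and pick a nonzero local section $s\in\mathcal{F}(U)$ on some open neighborhood $U$ of $P$ with $\Supp(s)=Z\cap U$ (this last equality can be arranged by removing from $U$ the finitely many irreducible components of $\Supp(s)$ that do not contain $P$). The goal is to identify $Z$ with $f(S)$ for some stratum $S$ of $(Y,B)$.

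The key step is a shrinking trick. Because $(Y,B)$ is an SNC pair, it has only finitely many strata $S_1,\dots,S_k$, so their images form a finite collection of irreducible closed subsets $f(S_1),\dots,f(S_k)\subseteq X$. Let $U'\subseteq U$ be obtained by removing each $f(S_i)$ that does \emph{not} contain $P$; this is still an open neighborhood of $P$, and the restriction $s|_{U'}$ is still nonzero with support $Z\cap U'$. After this shrinking, any stratum image $f(S_j)$ that meets $U'$ must already contain $P$, and hence must contain all of $Z$.

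Now Theorem~\ref{t-tor}(1), applied to $s|_{U'}$, produces a stratum $S_j$ with $f(S_j)\cap U'$ nonempty and contained in $\Supp(s|_{U'})=Z\cap U'$. By the previous paragraph this forces $Z\subseteq f(S_j)$. Conversely, the nonempty open subset $f(S_j)\cap U'$ of the irreducible set $f(S_j)$ contains its generic point $\eta_j$, which therefore lies in $Z$; whence $f(S_j)=\overline{\{\eta_j\}}\subseteq Z$. Combining the two inclusions gives $Z=f(S_j)$ and $P=\eta_j$, as desired.

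I do not expect any serious obstacle; the only conceptual point is to resist the natural but wrong attempt of applying Theorem~\ref{t-tor}(1) directly to an arbitrary section representing the associated prime. The finiteness of the strata of $(Y,B)$ is precisely what allows the shrinking step to preserve an open neighborhood of $P$.
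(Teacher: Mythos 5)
Your proof is correct and follows essentially the same route as the paper: localize near the associated point, discard the finitely many stratum images that would interfere, and apply the torsion-freeness statement of Theorem~\ref{t-tor}(1) to the resulting nonzero local section. The only cosmetic difference is that you argue directly (removing the stratum images not containing $P$ and concluding $Z=f(S_j)$), while the paper argues by contradiction after removing the stratum images properly contained in $Z(P)$; the underlying shrinking trick and the use of \eqref{t-tor}(1) are identical.
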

\begin{proof}
  Since the question is local on the base, we can assume that $X=\Spec
  R$ is affine. Let $P\subset R$ be an associated prime of
  $R^qf_*\OO_Y(L)$. By definition, there exists a nonzero 
  section $s\in \Gamma(X,R^qf_*\OO_Y(L))$ whose support is $Z(P)$.  

  We claim that $Z(P)$ is the image of a stratum of
  $(Y,B)$. Suppose it is not. Let $Z(Q_i)$ be all the (finitely many)
  images of strata of $(Y,B)$ that are contained in
  $Z(P)$. We have $Z(Q_i) \neq Z(P)$. Over the open subset $U=X
  \setminus \cup Z(Q_i)$ the support of the section $s|_U$ is
  $Z(P)\cap U\ne\emptyset$. But by \ref{t-tor}(1) we must have
  $s|_U=0$. Contradiction.
\end{proof}

We will also need the following weak form of the above theorem:

\begin{theorem}\label{thm:weak-vanishing}
  In the settings of the above theorem, in part (2) assume instead
  that $H$ is nef and that there exists an ample divisor $M$ on $X$
  such that for every image $V$ of a stratum of $(Y,B)$ with $\dim
  V\ge c$, one has $V\cdot M^c\cdot H^{\dim V-c}>0$. 

  Then $R^p\pi _*(R^q f_* \OO _Y(L))=0$ for any $p>c$ and $q\geq 0$.
\end{theorem}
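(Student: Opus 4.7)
I plan to proceed by induction on $c$. The base case $c = 0$ coincides with Theorem~\ref{t-tor}(2): the hypothesis reads $V \cdot H^{\dim V} > 0$ for every image $V$ of a stratum, which combined with nefness means $H|_V$ is big and nef on every such $V$.

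For the inductive step, fix $N \gg 0$ sufficiently divisible. Two ingredients are needed. First, applying Theorem~\ref{t-tor}(2) to the line bundle $\OO_Y(L + Nf^*M)$ on $Y$ yields
\[
R^p\pi_*\bigl(\mathcal{F}^q(NM)\bigr) = 0 \qquad (p > 0,\ q \geq 0),
\]
where $\mathcal{F}^q := R^qf_*\OO_Y(L)$, since $L + Nf^*M - (K_Y+B) = f^*(H+NM)$ and $H + NM$ is ample on $X$ for $N \gg 0$, hence big and nef on every stratum image. Second, I choose a general section $s \in H^0(X, \OO_X(NM))$, enlarging $N$ further so that $s$ is a regular element on every $\mathcal{F}^q$ (possible since by Corollary~\ref{cor:assoc-primes} the associated subvarieties of $\mathcal{F}^q$ are stratum images, which a sufficiently general section of a sufficiently positive ample line bundle avoids). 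Set $X' := \{s = 0\}$ and $Y' := f^{-1}(X')$. By Bertini applied in the ambient smooth variety (followed, if necessary, by a birational modification to restore the global embedded SNC form), $(Y', B|_{Y'})$ is again a global embedded SNC pair whose strata are components of $S \cap Y'$ for strata $S$ of $(Y,B)$, with images in $X'$ of the form $V \cap X'$. An adjunction calculation ($K_{Y'} = (K_Y + Y')|_{Y'}$ and $Y' \sim_{\bQ} Nf^*M$) gives
\[
\bigl(L|_{Y'} + Nf^*M|_{Y'}\bigr) - \bigl(K_{Y'} + B|_{Y'}\bigr) \sim_{\bQ} (f|_{Y'})^*\bigl(H|_{X'}\bigr),
\]
so the relevant positivity divisor on $X'$ is $H|_{X'}$. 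For an image $V' = V \cap X'$ of a restricted stratum with $\dim V' \geq c - 1$, one has $\dim V \geq c$, and
\[
V' \cdot (M|_{X'})^{c-1} \cdot (H|_{X'})^{\dim V' - (c-1)} = N \cdot V \cdot M^c \cdot H^{\dim V - c} > 0,
\]
so the weak positivity hypothesis holds on $X'$ with parameter $c - 1$. The inductive hypothesis, together with base change $\mathcal{F}^q(NM)|_{X'} \simeq R^q(f|_{Y'})_*\OO_{Y'}(L|_{Y'}+Nf^*M|_{Y'})$ (valid because $s$ is $\mathcal{F}^q$-regular), yields
\[
R^p(\pi|_{X'})_*\bigl(\mathcal{F}^q(NM)|_{X'}\bigr) = 0 \qquad (p > c - 1).
\]

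Finally, the $\mathcal{F}^q$-regularity of $s$ produces the short exact sequence
\[
0 \to \mathcal{F}^q \to \mathcal{F}^q(NM) \to \mathcal{F}^q(NM)|_{X'} \to 0,
\]
whose pushforward under $\pi$ gives, for any $p > c$, the segment
\[
R^{p-1}\pi_*\bigl(\mathcal{F}^q(NM)|_{X'}\bigr) \to R^p\pi_*\mathcal{F}^q \to R^p\pi_*\mathcal{F}^q(NM).
\]
Both outer terms vanish---the right by the ample case and the left by induction, since $p - 1 > c - 1$. Hence $R^p\pi_*\mathcal{F}^q = 0$ for $p > c$, closing the induction. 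The main technical hurdle I anticipate is arranging $(Y', B|_{Y'})$ to be a global embedded SNC pair with exactly the predicted strata, simultaneously with $s$ being $\mathcal{F}^q$-regular for all $q$; this is standard but needs care in the choice of $N$ and $s$, and possibly a birational modification of the ambient setup to preserve the SNC framework required by Theorem~\ref{t-tor}.
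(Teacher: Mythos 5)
Your proposal is correct and follows essentially the same route as the paper: cut by a general member of $|nM|$, observe that the positivity hypothesis descends with parameter $c-1$ to the hyperplane section while the twisted bundle becomes ample, and induct down to the case of Theorem \eqref{t-tor}(2). The only difference is one of packaging: you run the argument downstairs with the sheaves $\mathcal F^q=R^qf_*\mathcal O_Y(L)$ and a section $s$ avoiding their associated primes (note that the base-change isomorphism $\mathcal F^q(NM)|_{X'}\cong R^q(f|_{Y'})_*\mathcal O_{Y'}(L|_{Y'}+Nf^*M|_{Y'})$ is governed by $s$ being a nonzerodivisor on $\mathcal F^{q+1}$, not $\mathcal F^q$ --- which your choice of $s$ regular on every $\mathcal F^q$ does supply), whereas the paper runs the short exact sequence upstairs on $Y$ and kills the connecting homomorphisms via Theorem \eqref{t-tor}(1); these are two descriptions of the same mechanism.
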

\begin{proof}
  Let $D_X$ be a general element in the linear
  system $|nM|$, with $nM$ very ample. Denote $D_Y:= f\inv D_X$
  and $L_{D_Y}= (L+D_Y)|_{D_Y}$. 
  Consider the short exact sequence
  \begin{displaymath}
    0 \to \cO_Y(L) \to \cO_Y(L+D_Y) \to \cO_{D_Y}(L_{D_Y}) \to 0.
  \end{displaymath}
  In this sequence the line bundle $L_{D_Y}$ is
  $\bQ$-linearly equivalent to $(K_Y + D_Y + B + f^*H)|_{D_Y}= K_{D_Y}
  + B\cap D_Y + f^*H|_{D_Y}$, and so is of the same nature as $L$ but for the
    smaller global embedded simple normal crossing pair $(D_Y,B\cap D_Y)$.

    The images of the strata of $(D_Y,B\cap D_Y)$ are strictly smaller
    than the images of the strata of $(Y,B)$. Part (1) of the above
    Theorem~\ref{t-tor} implies that in the
    long exact sequence the connecting homomorphisms 
    $R^q f_*\cO_{D_Y}(L_{D_Y}) \to R^{q+1} f_*\cO_{Y}(L)$ are
    zero. Indeed, the sections in the image are generically zero at
    the image of every stratum of $(Y,B)$. Thus, for every $q$ we have
    a short exact sequence
  \begin{displaymath}
    0 \to R^q f_*\cO_Y(L) \to R^qf_*\cO_Y(L+D_Y) 
    \to R^qf_*\cO_{D_Y}(L_{D_Y}) \to 0.
  \end{displaymath}
  Since $H+D_X$ is ample, for the middle term we have
  $R^p\pi_*(R^qf_*\cO_Y(L+D_Y))=0$ for $p>0$ by \eqref{t-tor}(2). 
  Thus, $R^p\pi_*(R^qf_*\cO_{D_Y}(L_{D_Y}))=0$ for $p>k$ implies 
  $R^p\pi_*(R^q f_*\cO_Y(L))=0$ for $p>k+1$. 

  Cutting $c$ times by general divisors in $|nM|$, we arrive at the
  situation of \eqref{t-tor}(2), which gives vanishing of $R^p\pi_*$ for $p>0$.
  For the original sheaves $R^q f_*\cO_Y(L)$, this gives vanishing of
  $R^p\pi_*$ for $p>c$. 
\end{proof}

 \begin{definition}\label{d-qlc} A {\bf qlc} variety is a variety $X$, a $\Q$-Cartier divisor $\omega$ on $X$ and a finite collection $\{ C \}$ of irreducible reduced subvarieties of $X$ such that there is a proper morphism $f:Y\to X$ from a global embedded simple normal crossings pair $(Y,B)$ 
 such that:
 \begin{enumerate}
 \item $f^*\omega \sim _\Q K_Y+B$ and $B$ is a boundary divisor,
 \item $\OO _X \cong f_* \OO _Y(\rup -(B_Y^{<1}) .)$,
 \item $\{ C \}$  is given by the images of the strata of $(Y,\rdown B.)$.
 \end{enumerate}
 The elements of $\{ C \}$ are the {\bf qlc centers} of the {\bf qlc pair} $[X,\omega ]$.
 \end{definition}
 \begin{proposition}\label{p-qlc1} If $(X,B)$ is a lc pair, then $X$ is a qlc pair with $\omega =K_X+B$ and $\{ C \}$ is given by $X$ and the non-klt centers of $(X,B)$.
 \end{proposition}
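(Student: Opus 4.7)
The plan is to realize $[X,\omega=K_X+B]$ as a qlc pair directly from a log resolution. Take $f\colon Y\to X$ a log resolution of $(X,B)$, so $Y$ is smooth and $\mathrm{Exc}(f)\cup f^{-1}(\mathrm{Supp}\,B)$ has simple normal crossings. Define a divisor on $Y$ by the discrepancy formula
\begin{equation*}
B_Y \;:=\; f_*^{-1}B \,-\, \sum_{E} a_E(X,B)\,E,
\end{equation*}
summed over all $f$-exceptional divisors, so that by construction $K_Y+B_Y = f^*(K_X+B)$. Since $(X,B)$ is log canonical, every coefficient of $B_Y$ is $\le 1$; coefficients can be negative along $f$-exceptional divisors of positive discrepancy, but this will be absorbed by the round-up in condition (2) of Definition~\ref{d-qlc}. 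Because $Y$ is smooth with SNC boundary, we may take $M=Y$ to exhibit $(Y,B_Y)$ as a global embedded SNC pair.

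Next, I verify the three conditions of Definition~\ref{d-qlc} with $\omega=K_X+B$. Condition (1), $f^*\omega\sim_\Q K_Y+B_Y$, holds by construction. For condition (3), observe that the components of $\lfloor B_Y\rfloor$ are exactly those SNC divisors on $Y$ with coefficient $1$, namely the strict transforms of the components of $\lfloor B\rfloor$ together with the $f$-exceptional divisors of discrepancy $a_E(X,B)=-1$; these are precisely the non-klt places of $(X,B)$ visible on $Y$. A stratum of $(Y,\lfloor B_Y\rfloor)$ is then either $Y$ itself or an irreducible component of a transverse intersection of such divisors, and pushing forward by $f$ yields exactly $X$ together with the non-klt centers of $(X,B)$. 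That every non-klt center arises this way follows from the standard construction of log resolutions: any non-klt place can be extracted on a suitable model dominated by $Y$.

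The remaining check is condition (2), $\cO_X\cong f_*\cO_Y(\lceil -B_Y^{<1}\rceil)$. Here $B_Y^{<1}$ is the part of $B_Y$ with coefficients strictly less than $1$. The potentially negative coefficients occur precisely along $f$-exceptional divisors with $a_E(X,B)>0$, and the strict-transform part contributes coefficients in $[0,1)$ which round to $0$ after negation. Thus $\lceil -B_Y^{<1}\rceil = \sum_{a_E>0}\lceil a_E\rceil\,E$ is an effective $f$-exceptional divisor, so by the standard fact that the pushforward of $\cO_Y(E')$ for any effective $f$-exceptional divisor $E'$ on a birational model of a normal variety equals $\cO_X$, the desired isomorphism follows.

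The only subtle point is bookkeeping: the $B_Y$ we construct is a subboundary (coefficients $\le 1$, possibly negative along positively-discrepant exceptional divisors), rather than a boundary in the strictest $[0,1]$ sense; this is the usual Fujino convention and is exactly what condition (2) is designed to absorb. All other verifications are routine unwinding of the definitions of log resolution, discrepancy, and non-klt center.
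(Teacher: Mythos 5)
Your proposal is correct in substance and it is essentially the argument of Fujino \cite[3.31]{Fujino09}, which is all the paper itself offers (the paper's ``proof'' is just that citation): pull back crepantly to a log resolution, observe that $\lceil -(B_Y^{<1})\rceil$ is effective and $f$-exceptional so that condition (2) holds because $X$ is normal, and identify the images of the strata of $(Y,\lfloor B_Y\rfloor)$ with $X$ and the non-klt centers. Your reading of the boundary/subboundary issue is also the intended one; the round-up in condition (2) only has content if $B_Y$ is allowed negative exceptional coefficients.

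Two points need tightening. First, you cannot literally take $M=Y$: in Definition~\ref{d-qlc} the pair $(Y,B)$ must be an SNC \emph{divisor} inside a smooth ambient variety, so embed $Y\cong Y\times\{0\}\subset M=Y\times\mathbb{C}$ and take $D$ to be the pullback of $B_Y$ under the projection; this is the standard device and a one-line fix. Second, the verification of condition (3) is where the actual content of the proposition sits, and your justification is thin. For the inclusion ``image of a stratum $\Rightarrow$ non-klt center'' you should note that, by the SNC dimension count, no component of $B_Y$ other than the coefficient-one divisors cutting out a stratum $W$ can contain $W$, so the blow-up of $W$ produces a divisor of discrepancy exactly $-1$ over $(X,B)$. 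For the converse, the remark that a non-klt place ``can be extracted on a suitable model dominated by $Y$'' is stated backwards (the model must dominate $Y$) and, as written, does not yield the claim: what is needed is the standard lemma that for the log smooth sub-lc pair $(Y,B_Y)$ every divisor over $Y$ of discrepancy $-1$ has center on $Y$ equal to a stratum of $B_Y^{=1}$, whence its center on $X$ is the image of such a stratum. Both facts are standard and fillable, so this is a matter of completing routine details rather than a flaw in the approach.
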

 \begin{proof}
 \cite[3.31]{Fujino09}.
 \end{proof}
 \begin{proposition}\label{p-qlc2} Let $[X,\omega ]$ be a qlc pair, $X'$
 be a union of qlc centers of $[X,\omega ]$ with the reduced scheme structure and $\mathcal I_{X'}\subset \mathcal O_X$ the corresponding ideal. Then
 \begin{enumerate}
 \item $[X', \omega '=\omega |_{X'}]$ is a qlc pair whose centers are the centers of $[X,\omega ]$ contained in $X'$.
 \item If $X$ is projective and $L$ is a Cartier divisor on $X$ such that $L-\omega $ is ample, then $$H^q(\OO _X(L))=H^q(\mathcal I _{X'}\otimes\OO _X(L))=0\qquad {\rm for}\ q>0.$$ In particular $H^q (\OO _{X'}(L))=0$ for $q>0$ as  $[X', \omega ']$ is a qlc pair and $L|_{X'}-\omega '$ is ample.
 \end{enumerate}
 \end{proposition}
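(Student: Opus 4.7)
The plan is to reduce both parts to Theorem \ref{t-tor} applied to the global embedded SNC data $f:Y\to X$ realizing the qlc structure on $X$.

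For (1), let $Y'\subset Y$ be the union of those strata of $(Y,\lfloor B\rfloor)$ whose $f$-image is contained in $X'$; since $X'$ is a union of qlc centers and these centers are precisely the images of such strata, $Y'$ inherits a natural reduced scheme structure. The restricted pair $(Y', B|_{Y'})$ together with $f|_{Y'}\colon Y' \to X'$ is the candidate realization of the qlc structure on $X'$. Conditions (1) and (3) of Definition \ref{d-qlc} are immediate from the construction; the content is condition (2), $\mathcal{O}_{X'} \cong (f|_{Y'})_*\mathcal{O}_{Y'}(\lceil -(B|_{Y'})^{<1}\rceil)$. I would obtain this by pushing forward the short exact sequence
\[
0 \to \mathcal{O}_Y\bigl(\lceil -B^{<1}\rceil - Y'\bigr) \to \mathcal{O}_Y(\lceil -B^{<1}\rceil) \to \mathcal{O}_{Y'}(\lceil -B^{<1}\rceil|_{Y'}) \to 0
\]
under $f$, using Corollary \ref{cor:assoc-primes}: the associated primes of the $R^{1}f_{*}$ of the leftmost term can only be supported at images of strata, and by our very choice of $Y'$ these images lie in $X'$, so $f_{*}$ of the first map has image exactly $\mathcal{I}_{X'}$.

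For (2), observe that since $B$ is a boundary, all coefficients of $B^{<1}$ lie in $[0,1)$, hence $\lceil -B^{<1}\rceil = 0$ as a divisor, and Definition \ref{d-qlc}(2) reduces to $\mathcal{O}_X \cong f_*\mathcal{O}_Y$. By the projection formula, $\mathcal{O}_X(L) \cong f_*\mathcal{O}_Y(f^*L)$. Using $f^*\omega \sim_{\mathbb{Q}} K_Y + B$, one has
\[
f^*L - (K_Y+B) \sim_{\mathbb{Q}} f^*(L-\omega),
\]
and $H := L-\omega$ is ample. Theorem \ref{t-tor}(2) applied with $\pi : X \to \mathrm{pt}$ then yields $H^p(X, R^qf_*\mathcal{O}_Y(f^*L)) = 0$ for all $p > 0$ and $q \ge 0$; the case $q = 0$ gives $H^p(\mathcal{O}_X(L)) = 0$ for $p > 0$.

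For the ideal sheaf vanishing, I would apply Theorem \ref{t-tor}(2) to $\mathcal{O}_Y(f^*L - Y')$ on $Y$, using the identification $f_*\mathcal{O}_Y(f^*L - Y') = \mathcal{I}_{X'} \otimes \mathcal{O}_X(L)$ that drops out of the analysis in (1). The numerical condition $(f^*L - Y') - (K_Y + B - Y') \sim_{\mathbb{Q}} f^*H$ uses the same ample $H$, and since $Y'$ is a union of strata the pair $(Y, B - Y')$ (after suitable SNC reinterpretation of $Y'$) still falls within the scope of the theorem. The final vanishing $H^q(\mathcal{O}_{X'}(L|_{X'})) = 0$ for $q>0$ is then obtained either from the long exact sequence of $0 \to \mathcal{I}_{X'} \otimes \mathcal{O}_X(L) \to \mathcal{O}_X(L) \to \mathcal{O}_{X'}(L|_{X'}) \to 0$, or directly by applying the vanishing already proved to the qlc pair $[X',\omega']$ furnished by (1).

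The main technical obstacle I anticipate is the careful treatment of $Y'$ as a subscheme of $Y$—in particular when the strata defining it are non-divisorial—and the identification $f_*\mathcal{O}_Y(-Y') = \mathcal{I}_{X'}$. This bookkeeping, relying crucially on Corollary \ref{cor:assoc-primes}, is the technical heart of the theory of qlc pairs and is where the SNC hypothesis and the torsion-freeness theorem really earn their keep.
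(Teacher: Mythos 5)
The paper itself does not prove this proposition; it cites Ambro [4.4] and Fujino [3.39], and your outline is indeed their strategy (push forward a sequence relating $Y$, the union $Y'$ of strata over $X'$, and the remaining part, then invoke torsion-freeness and vanishing). However, two essential steps are missing or misstated. First, the strata of $(Y,\rdown B.)$ are in general \emph{not} divisors, so the sheaf $\OO_Y(\lceil -B^{<1}\rceil - Y')$ and your short exact sequence do not exist as written, and Theorem~\ref{t-tor} cannot be applied to the left-hand term. The actual proof begins by modifying the global embedded snc data (blowing up the ambient smooth variety) so that the union of \emph{all} strata mapped into $X'$ becomes a union of irreducible components of $Y$, and one must check, again using the torsion-freeness theorem, that conditions (1)--(3) of Definition~\ref{d-qlc} survive this replacement. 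You flag this as an ``anticipated obstacle,'' but it is the technical heart of the statement, not bookkeeping; without it the argument does not start.

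Second, once $Y'$ is a union of components and $Y''$ denotes the remaining components, the left term is $\OO_{Y''}(-Y'|_{Y''})$ (note $\lceil -B^{<1}\rceil=0$ here since $B$ is a boundary), and by adjunction $-Y'|_{Y''}-(K_{Y''}+B|_{Y''})\sim_{\Q}-(f^*\omega)|_{Y''}$, so Corollary~\ref{cor:assoc-primes} must be applied to the pair $(Y'',B|_{Y''})$. Its strata are strata of $(Y,B)$ not contained in $Y'$, hence --- by the very definition of $Y'$ --- none of their images is contained in $X'$. This is the \emph{opposite} of your assertion that ``these images lie in $X'$,'' and it is exactly what the proof needs: it forces the connecting map $f_*\OO_{Y'}(\cdot)\to R^1f_*\OO_{Y''}(-Y'|_{Y''})$, whose image is supported on $X'$, to vanish, which yields the surjectivity of $\OO_X\to f_*\OO_{Y'}(\cdot)$ and hence $f_*\OO_{Y'}(\cdot)\cong\OO_{X'}$, i.e.\ condition (2) of Definition~\ref{d-qlc} for $[X',\omega']$, together with $f_*\OO_{Y''}(-Y'|_{Y''})=\mathcal I_{X'}$. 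Your stated conclusion (``the image of $f_*$ of the first map is $\mathcal I_{X'}$'') does not by itself give that surjectivity, so part (1) is not established as written. Two further slips in the same vein: the boundary on $Y'$ must be $(B+Y'')|_{Y'}$, not $B|_{Y'}$, so that $(f|_{Y'})^*\omega'\sim_{\Q}K_{Y'}+(B+Y'')|_{Y'}$ and intersections with $Y''$ count as strata (otherwise you lose qlc centers of $X'$); and in (2) the pair ``$(Y,B-Y')$'' is not meaningful --- the ideal-sheaf vanishing should come from Theorem~\ref{t-tor}(2) applied to $\OO_{Y''}(f^*L|_{Y''}-Y'|_{Y''})$ for the pair $(Y'',B|_{Y''})$ with $H=L-\omega$. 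Your application of Theorem~\ref{t-tor}(2) with $\pi:X\to\mathrm{pt}$ for $H^q(\OO_X(L))=0$, and the deduction of $H^q(\OO_{X'}(L))=0$ from the long exact sequence, are fine.
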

 \begin{proof}
 \cite[4.4]{AmbroQLvars}, \cite[3.39]{Fujino09}.
 \end{proof}

\section{Non-rational centers}
\label{sec:centers}

\subsection{The ``Get rid of the $A$'' trick}
\label{sec:resolution-of-pairs}

Let $f:Y\to X$ be a log resolution of $(X,B)$. We write
$$K_Y+E-A+\Delta=f^*(K_X+B)$$ where
 $E\geq 0$ is reduced, $A\geq 0$ is integral and exceptional, and
 $\rdown \Delta .=0$. Here, 
$E$ has no common components with either $A$ or $\Delta$, but $A$ and
$\Delta$ may have common components.

\begin{theorem}\label{t-res} Let $(X,B)$ be a lc pair. Then there exist morphisms
$f':Y'\to X$ and $\nu :Y\to Y'$ such that:
\begin{enumerate}
\item $f=f'\circ \nu$ is a log resolution of $(X,B)$,
\item $Y'$ is normal, $E'=\nu _* E$, $\Delta '=\nu _* \Delta $, and
  $A'=\nu_* A=0$,
\item $(Y', E'+\Delta ')$ is dlt,
\item $\nu$ is an isomorphism at the generic point of each non-klt center of $(Y,E+\Delta )$ and in particular there is a bijection between the non-klt centers of $(Y,E+\Delta )$ and the non-klt centers of $(Y',E'+\Delta ')$.
\end{enumerate}\end{theorem}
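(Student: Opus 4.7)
My plan is to produce $\nu\colon Y\to Y'$ as the output of a relative $(K_Y+E+\Delta)$-MMP over $X$ that contracts precisely the components of the effective exceptional divisor $A$. The starting observation is to rewrite the discrepancy identity as
$$K_Y+E+\Delta = f^*(K_X+B)+A,$$
so that $K_Y+E+\Delta\equiv_f A$. Since $f$ is a log resolution, $(Y,E+\Delta)$ is log smooth; because $E$ and $\Delta$ share no common components and $\rdown \Delta . =0$, the divisor $E+\Delta$ is a boundary with $\rdown E+\Delta . = E$. Hence $(Y,E+\Delta)$ is dlt, and its non-klt centers are precisely the strata of $E$.

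I would then run the relative $(K_Y+E+\Delta)$-MMP over $X$. By the relative cone theorem, together with the negativity lemma applied to the $f$-exceptional effective divisor $A$, every $(K_Y+E+\Delta)$-negative extremal ray over $X$ is spanned by a curve $C\subset \Supp A$. Consequently each divisorial contraction contracts (the strict transform of) a component of $A$, and each flipping locus is supported on (the strict transform of) $A$. The dlt property and the boundary structure of $E+\Delta$ are preserved at each step, since all the operations take place inside $\Supp A$ and $A$, $E$ share no common components. Invoking standard minimal model program techniques for existence of flips and termination in this relative setting, the process concludes with a projective birational morphism $\nu\colon Y\to Y'$ over $X$ for which $Y'$ is normal, $\nu_* E = E'$, $\nu_*\Delta = \Delta'$, $\nu_* A = 0$, and $(Y',E'+\Delta')$ is dlt. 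This establishes items (1)--(3).

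For (4), let $S$ be a non-klt center of $(Y,E+\Delta)$, i.e.\ an irreducible component of an intersection $\bigcap_{j\in J} E_{i_j}$ of components of $E$. Because $E+A$ has simple normal crossings support and no component of $E$ equals any component of $A$, for every component $A_k$ of $A$ the intersection $A_k\cap \bigcap_{j\in J} E_{i_j}$ has codimension at least $|J|+1$ in $Y$, strictly greater than the codimension of $S$ itself. Hence the generic point $\eta_S$ of $S$ lies in none of the components of $A$, i.e.\ $\eta_S\notin \Supp A$. Because every birational modification along the MMP is supported on (the strict transform of) $\Supp A$, $\nu$ is an isomorphism on an open neighborhood of $\eta_S$. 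Combined with the fact that strata of $E'$ are the proper transforms of strata of $E$, this yields the claimed bijection between non-klt centers of $(Y,E+\Delta)$ and of $(Y',E'+\Delta')$.

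The main obstacle I expect is the rigorous justification of the MMP itself: existence of flips and termination for the $(K_Y+E+\Delta)$-MMP over $X$. This is where genuine minimal model theory enters, and one should either invoke a BCHM-type result or a direct dlt-modification construction that exploits the fact that the whole MMP is driven by a fixed effective $f$-exceptional divisor $A$ with finitely many components. Once that MMP is in hand, the remaining assertions --- normality and dlt-ness of $(Y',E'+\Delta')$, the vanishing $\nu_*A=0$, and the non-klt center correspondence --- follow cleanly from the SNC geometry of $E+A$ and the fact that all MMP modifications are confined to $\Supp A$.
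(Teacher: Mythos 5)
Your mechanism (contract $A$ by running a relative $(K_Y+E+\Delta)$-MMP over $X$, using $K_Y+E+\Delta\equiv_f A$, negativity to get $A'=0$, and the SNC geometry to see that generic points of strata of $E$ avoid $\Supp A$) is essentially the right one, but the argument does not prove the statement as written because the output of an MMP is in general only a birational map. You yourself allow flips (``each flipping locus is supported on the strict transform of $A$''), and a composition of divisorial contractions and flips is not a morphism; so the assertion that ``the process concludes with a projective birational morphism $\nu\colon Y\to Y'$'' is unjustified, and with it item (1) (that $f=f'\circ\nu$ is a log resolution) and your proof of item (4), both of which treat $\nu$ as a morphism from the \emph{given} resolution. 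This is exactly the point the paper's proof is organized around: in the theorem, $Y$ is part of the conclusion, not a datum. The paper first constructs $Y'$ by birational maps (Claim \ref{c-1}): it runs a klt $(K_{Y_1}+\Delta_1)$-MMP over $X$ (not the dlt $(K+E+\Delta)$-MMP), uses the negativity lemma to conclude $A_2=0$, and then produces a dlt model by resolving $(Y_2,E_2+\Delta_2)$ and running a second, perturbed klt MMP over $Y_2$. Only afterwards is the resolution $Y$ chosen (Claim \ref{c-2}): since $(Y',E'+\Delta')$ is dlt, Szab\'o's characterization gives a log resolution $\nu\colon Y\to Y'$ which is an isomorphism at the generic point of every non-klt center, and this is how (1) and (4) are obtained. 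Without this final step your construction cannot deliver the morphism $\nu$ that the later results (Corollary \ref{c-res1}, Lemma \ref{l-0}) need in order to compute $R^i\nu_*$.

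A secondary issue is your appeal to ``a BCHM-type result'' for the dlt $(K_Y+E+\Delta)$-MMP: BCHM gives existence and termination of the MMP with scaling for \emph{klt} pairs with big boundary, and does not directly cover a dlt run. The paper's two-step route is designed to stay inside that statement: the first run uses the klt pair $(Y_1,\Delta_1)$ (whose boundary is automatically big over the birational base $X$), and the dlt modification is achieved by a klt MMP for a small perturbation $\Xi$ of $\Gamma+H$, with the negativity lemma again forcing $F'=0$. Your correct observations --- that $(K+E+\Delta)$-negative curves over $X$ lie in $\Supp A$, that $A$ must be gone on the final model by negativity, and that the generic points of strata of $E$ lie outside $\Supp A$ --- all reappear in the paper, but the missing ideas are (i) the passage to a new resolution of $Y'$ via Szab\'o to make $\nu$ a morphism while preserving (4), and (ii) an arrangement of the MMP runs that keeps them within known (klt) termination results.
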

\begin{proof}

\begin{claim}\label{c-1}
  For any resolution $Y_1\to X$, there exists a rational map $\alpha
  :Y_1\dasharrow Y'$ and a morphism $f':Y'\to X$ such that $\alpha _*
  A_1=0$, $K_{Y'}+\nu _* (E_1+\Delta_1 )$ is dlt and $K_{Y'}+\nu _*
  (E_1+\Delta_1)=(f')^*(K_X+B)$.
\end{claim}
\begin{proof} 
  By \cite{BCHM} we may run a $(K_{Y_1}+\Delta_1 )$-MMP over $X$ say
  $\beta :Y_1\dasharrow Y_2$. Let $f_2:Y_2\to X$ be the corresponding
  morphism, $E_2=\beta _* E_1$, $A_2=\beta _* A_1$ and $\Delta_2=\beta _*
  \Delta_1$.  Since $$E_2-A_2+(K_{Y_2}+ \Delta_2 ) =(f_2)^*(K_X+B),$$
  $K_{Y_2}+ \Delta_2$ is nef over $X$ and $(f_2) _*(E_2-A_2)\geq 0$, by the
  Negativity Lemma, $E_2 -A_2\geq 0$.  As $A_2$ and $E_2$ have no common
  components, $A_2=0$. Therefore $$K_{Y_2}+ E_2+\Delta_2
  =(f_2)^*(K_X+B),$$ $(Y_2,E_2+\Delta_2)$ is lc with the same non-klt
  places as $(X,B)$ and $Y_2$ is $\Q$-factorial.

  Let $\gamma :Y_3\to Y_2$ be a log resolution of
  $(Y_2,E_2+\Delta_2)$. We write
  $$K_{Y_3}+\Gamma =\gamma ^* (K_{Y_2}+E_2+\Delta _2)+F$$ 
  where $\Gamma$ and $F$ are effective with no common components, $\gamma
  _*(\Gamma )= E_2+\Delta_2$ and $\gamma_*F=0$.  Let $C$ be an effective
  $\gamma$-exceptional divisor such that $-C$ is ample over $Y_2$ and $||
  C ||\ll 1$. Let $H\sim _{\Q,Y_2}-C$ be a general ample $\Q$-divisor
  such that $K_{Y_3}+\Gamma +H$ is dlt.  We have that
$$\Gamma +H \sim _{\Q,Y_2}\Gamma -C \sim _{\Q,Y_2}\Xi$$ 
where $(Y_3,\Xi )$ is klt.  After running a $(K_{Y_3}+\Xi)$-MMP over
$Y_2$, we obtain $\mu:Y'\to Y_2$ such that $K_{Y'}+\Xi '$ is $\gamma$-nef, where $\Xi'$ denotes the strict transform of $\Xi$ on $Y'$ and similarly for other divisors.  Since
$$C'-F'+(K_{Y'}+\Xi ')\sim _{\Q , Y_2} 0$$ and $\mu _* (C'-F' )\geq 0$, then
by the Negativity Lemma, we have that $C'-F'\geq 0$. Since $|| C ||\ll
1$, this implies that $F'=0$ so that $K_{Y'}+\Gamma '=\mu ^*
(K_{Y_2}+E_2+\Delta_2)$.  Since a $(K_{Y_3}+\Xi)$-MMP over $Y_2$ is
automatically a $(K_{Y_3}+\Gamma +H)$-MMP over $Y_2$, it follows that
$K_{Y'}+\Gamma' +H'$ is dlt.  In particular $K_{Y'}+\Gamma'$ is dlt.
\end{proof}
\begin{claim}\label{c-2} There exists a resolution
  $\nu:Y\to Y'$ which is an isomorphism at the generic
  point of any non-klt center of $(Y,E+\Delta )$.
\end{claim}
\begin{proof} This follows from the characterization of dlt
  singularities given in  \cite{Szabo}.
\end{proof} 

Theorem~\ref{t-res} now follows. \end{proof}
\begin{remark} Note that variants of \eqref{c-1} appear in
  \cite[3.1]{KK10} and \cite[10.4]{Fuj10}, where they are referred to
  as an unpublished theorem of Hacon.
\end{remark}
\begin{corollary}\label{c-res1} 
  Let $Y$ be as in Theorem~\ref{t-res}. Then 
  one has $R^if_*\cO_Y=R^if_*\cO_Y(A)$ for all $i\geq 0$.  
\end{corollary}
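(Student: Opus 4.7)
The plan is to exploit the factorization $f = f' \circ \nu$ from Theorem~\ref{t-res} and reduce to showing that $R\nu_* \cO_Y = R\nu_* \cO_Y(A) = \cO_{Y'}$. Once this is established, applying $Rf'_*$ via the Leray spectral sequence gives $Rf_* \cO_Y = Rf'_* \cO_{Y'} = Rf_* \cO_Y(A)$, which is the desired equality of higher direct image sheaves.

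The equality $R\nu_* \cO_Y = \cO_{Y'}$ is essentially free: by Theorem~\ref{t-res}(3) the pair $(Y', E' + \Delta')$ is dlt, hence $Y'$ has rational singularities by Theorem~\ref{t-klt}, and since $\nu: Y \to Y'$ is a resolution this gives the vanishing of all higher $R^q \nu_* \cO_Y$ together with $\nu_*\cO_Y = \cO_{Y'}$.

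For the second equality $R\nu_* \cO_Y(A) = \cO_{Y'}$, I would argue in two steps. First, the identification $\nu_* \cO_Y(A) = \cO_{Y'}$ is immediate: $A$ is $\nu$-exceptional by Theorem~\ref{t-res}(2) (which gives $A' = \nu_* A = 0$), and since $Y'$ is normal, any local section of $\cO_Y(A)$ pushes down to a regular function on $Y'$. Second, for $q > 0$ the plan is to apply Corollary~\ref{cor:assoc-primes} to the SNC pair $(Y, E + \Delta)$, the morphism $\nu: Y \to Y'$, and the line bundle $L = A$. The required semiampleness of $L - (K_Y + E + \Delta) = -f^*(K_X+B)$ over $Y'$ holds automatically, since $-f^*(K_X+B) = -\nu^*((f')^*(K_X+B))$ is pulled back from $Y'$, hence $\bQ$-linearly trivial relative to $\nu$. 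Corollary~\ref{cor:assoc-primes} then tells me that every associated prime of $R^q\nu_* \cO_Y(A)$ is the generic point of the $\nu$-image of a stratum of $(Y, E + \Delta)$, that is, either the generic point of $Y'$ itself or a non-klt center of $(Y', E' + \Delta')$ (using the bijection of non-klt centers from Theorem~\ref{t-res}(4)). But for $q > 0$, $R^q\nu_* \cO_Y(A)$ is supported on the image of the $\nu$-exceptional locus, which does not contain the generic point of $Y'$; and Theorem~\ref{t-res}(4) furthermore guarantees that $\nu$ is an isomorphism at generic points of non-klt centers of $(Y', E' + \Delta')$, so $R^q\nu_*$ also vanishes there. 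Hence $R^q\nu_* \cO_Y(A)$ has no associated primes and is zero.

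The main obstacle to navigate is that $(X, B)$ is only log canonical, which precludes the direct application of Kawamata--Viehweg vanishing at the level of $f$. The essential trick is to recognize that after factoring through the dlt model $Y'$, the divisor $-f^*(K_X+B)$ becomes $\bQ$-trivial relative to $\nu$, unlocking Ambro's torsion-freeness (through Corollary~\ref{cor:assoc-primes}) for $\nu$; combined with the bijective correspondence of non-klt centers in Theorem~\ref{t-res}(4), this forces the vanishing of all higher direct images under $\nu$.
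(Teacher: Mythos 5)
Your proposal is correct and follows essentially the same route as the paper: push forward along $\nu$, use that $(Y',E'+\Delta')$ is dlt (hence $Y'$ has rational singularities) for $R^\bullet\nu_*\cO_Y=\cO_{Y'}$, use Ambro's torsion-freeness (the paper cites \eqref{t-tor}(1) directly, you route it through \eqref{cor:assoc-primes}) together with \eqref{t-res}(4) to kill $R^q\nu_*\cO_Y(A)$ for $q>0$, and then compose with $f'$. The only difference is that you spell out the semiampleness hypothesis and the $\nu$-exceptionality of $A$ explicitly, which the paper leaves implicit.
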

\begin{proof} 
  We have  $\nu _* \OO
  _Y(A)=\OO _{Y'}$, and for $i>0$ one has $R^i\nu _* \OO _Y(A)=~0$ at the generic
  point of any non-klt center of $(Y',E'+\Delta ')$ and so by
  \eqref{t-tor}, $R^i\nu _* \OO _Y(A)=~0$. Therefore, 
$R^\bullet\nu_* \OO_Y(A) = \OO_{Y'}$. 

Since $(Y',B')$ is dlt, by
  \eqref{t-klt} $Y'$ has rational singularities, so 
$R^\bullet\nu_* \OO_Y = \OO_{Y'}$. 
It follows that $R^\bullet f_* \OO_Y(A) = R^\bullet f_* \OO_Y$ and so
$R^if_* \OO _Y=R^if_* \OO _Y(A)$ for all~$i$.
\end{proof}

\begin{proof}[Proof of Theorem~\ref{thm:centers}]
  Let $f\colon (Y, E+\Delta)\to X$ be a log resolution of $(X,B)$, as in
  Theorem~\ref{t-res}. Thus, $(Y,E+\Delta)$ is a normal crossing pair,
  and the $f$-images of the strata of $(Y,E+\Delta)$ are the non-klt
  centers of $(X,B)$.

  By \eqref{cor:assoc-primes}, the zero locus of any associated prime
  of $R^if_*\cO_Y(A)$ is a non-klt center. We are now done by the
  above Corollary~\ref{c-res1}.
\end{proof}

\begin{theorem}\label{thm:remove-A-always}
  For \emph{any} log resolution $f:Y\to X$ one has $R^\bullet
  f_*\cO_Y(A) = R^\bullet f_*\cO_Y$. 
\end{theorem}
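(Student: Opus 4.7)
The plan is to reduce to the specific log resolution produced by Theorem~\ref{t-res}, where the statement has already been established as Corollary~\ref{c-res1}. Given the arbitrary log resolution $f\colon Y\to X$, I would construct a second log resolution $f_0\colon Y_0\to X$ as in Theorem~\ref{t-res}, so that $R^\bullet f_{0*}\cO_{Y_0}(A_{Y_0})=R^\bullet f_{0*}\cO_{Y_0}$ by Corollary~\ref{c-res1}, and then take a common log resolution $g\colon W\to X$ dominating both via morphisms $p\colon W\to Y$ and $q\colon W\to Y_0$ with $f\circ p=f_0\circ q=g$. After a further refinement---available by applying Theorem~\ref{t-res} to the SNC (hence dlt) pairs $(Y,E_Y+\Delta_Y)$ and $(Y_0,E_{Y_0}+\Delta_{Y_0})$---one may assume that $p$ and $q$ are isomorphisms at the generic points of the respective non-klt centers.

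The main technical step is the comparison lemma $R^\bullet p_*\cO_W(A_W)=\cO_Y(A_Y)$, and analogously for $q$. Via the projection formula this reduces to $R^\bullet p_*\cO_W(M)=\cO_Y$, where $M:=A_W-p^*A_Y$. Writing the decomposition for $f\circ p$ on $W$ and subtracting $p^*$ of the decomposition for $f$ on $Y$, one finds $c^{(X)}_F=c^{(Y)}_F-(p^*A_Y)_F$ at every prime divisor $F\subset W$, where $c^{(X)}_F$ and $c^{(Y)}_F$ are the coefficients in the standard decompositions for $f\circ p\colon W\to X$ and for $p\colon W\to(Y,E_Y+\Delta_Y)$, respectively. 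A case-by-case analysis of the integer-fractional split $c=E_F-A_F+\Delta_F$ identifies $M$ with the divisor $A^{(Y)}_W$ associated to $p$ viewed as a log resolution of $(Y,E_Y+\Delta_Y)$. The only scenario that would give $M$ a negative coefficient requires a $p$-exceptional $F$ of log discrepancy $-1$ over $(Y,E_Y+\Delta_Y)$ whose center lies inside $\operatorname{supp}(A_Y)$; but such a center is a stratum of $E_Y$, and in SNC---given that $E$ and $A$ share no components---no positive-codimension stratum of $E_Y$ can be contained in a component of $A_Y$. Thus $M=A^{(Y)}_W$ is effective and $p$-exceptional.

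With this identification in hand, the argument of Corollary~\ref{c-res1} applies word for word to $p\colon W\to Y$ with the divisor $A^{(Y)}_W$: the pair $(Y,E_Y+\Delta_Y)$ is its own dlt model and $Y$, being smooth, has rational singularities; Ambro's torsion-freeness Theorem~\ref{t-tor}(1) via Corollary~\ref{cor:assoc-primes} forces the associated primes of $R^ip_*\cO_W(A^{(Y)}_W)$ to be $p$-images of strata of $(W,E^{(Y)}_W+\Delta^{(Y)}_W)$, i.e., non-klt centers of $(Y,E_Y+\Delta_Y)$, where the isomorphism hypothesis on $p$ kills these sheaves for $i>0$; and $p_*\cO_W(A^{(Y)}_W)=\cO_Y$ because the divisor is effective and $p$-exceptional over the normal $Y$. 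The projection formula then delivers $R^\bullet p_*\cO_W(A_W)=\cO_Y(A_Y)$ and, symmetrically, $R^\bullet q_*\cO_W(A_W)=\cO_{Y_0}(A_{Y_0})$. Chaining yields
$R^\bullet f_*\cO_Y(A_Y)=R^\bullet g_*\cO_W(A_W)=R^\bullet f_{0*}\cO_{Y_0}(A_{Y_0})=R^\bullet f_{0*}\cO_{Y_0}=R^\bullet f_*\cO_Y$, the penultimate equality being Corollary~\ref{c-res1} and the last the standard birational invariance of the derived push-forward of the structure sheaf for smooth resolutions. The main obstacle is the SNC-based coefficient identification $M=A^{(Y)}_W$: once the SNC hypothesis is seen to exclude the case that would spoil effectivity of $M$, the bootstrap from the special to the general log resolution is formal.
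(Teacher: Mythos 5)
Your reduction has a genuine gap at the step where you ``may assume that $p$ and $q$ are isomorphisms at the generic points of the respective non-klt centers.'' These two conditions cannot in general be arranged on a single common resolution $W$, and the case where they fail is exactly the essential case of the theorem. Concretely, let $Y_0=Y$ blown up along a codimension-two stratum $S=E_1\cap E_2$ of $E_Y$, with exceptional divisor $F$ (which has discrepancy $-1$, hence is a component of $E_{Y_0}$), and let $\sigma:Y_0\to Y$ be the blow-down, so that $p=\sigma\circ q$ for any $W$ mapping to both. The two strata $F\cap E_1'$ and $F\cap E_2'$ of $E_{Y_0}$ both map onto $S$; if $q$ is an isomorphism near their generic points, then $p^{-1}(\eta_S)$ contains at least two distinct points, so $p$ cannot be an isomorphism over the generic point of the stratum $S$ of $E_Y$. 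Your appeal to Theorem~\ref{t-res} only adapts $W$ to one of the two pairs at a time. Without the isomorphism hypothesis for $p$, the torsion-freeness argument has nothing to kill the stalk of $R^ip_*\cO_W(A^{(Y)}_W)$ at $\eta_S$, and one cannot argue instead that $A^{(Y)}_W$ is trivial over $\eta_S$: non-toroidal blow-ups over $S$ produce exceptional divisors of positive discrepancy over $(Y,E_Y+\Delta_Y)$, i.e.\ genuine components of $A^{(Y)}_W$ lying over $\eta_S$. At that point your key lemma $R^\bullet p_*\cO_W(A^{(Y)}_W)=\cO_Y$ for an arbitrary log resolution $p$ of the SNC pair $(Y,E_Y+\Delta_Y)$ is essentially the theorem itself, so the argument becomes circular.

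This is precisely the difficulty the paper's proof is designed to handle: before taking a common resolution it first modifies \emph{both} given resolutions by blowing up strata of $E$ only, and proves that such stratum blow-ups do not change $R^\bullet f_*\cO(A)$ --- there $A'$ is the strict preimage of $A$ and $\cO(A')$ is trivial over the generic points of the strata (since, by the SNC condition, no stratum of $E$ lies in $\Supp A$), so the higher direct images vanish there even though the map is \emph{not} an isomorphism over those points, and torsion-freeness finishes. Only after the two resolutions have been made to share the same non-klt places does one resolve the remaining indeterminacy by blow-ups away from the strata, where your isomorphism-based argument does apply. Your coefficient identification $M=A^{(Y)}_W$ (and the SNC exclusion of the log-discrepancy-zero case) is correct and is implicitly present in the paper as well, but it does not substitute for this missing intermediate step; as written, the bootstrap from the special resolution of Corollary~\ref{c-res1} to a general one does not go through.
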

\begin{proof}
  Let $f_1:Y_1\to X$, $f_2:Y_2\to X$ be two log resolutions. First, we
  are going to construct modifications $g_1:Y'_1\to Y_1$, $g_2:Y'_2\to
  Y_2$ by blowing up \emph{the strata of $E_1$, resp. $E_2$, only.} If
  $g_1$ is a sequence of such blowups then
  \begin{displaymath}
    R^\bullet (g_1)_* \cO_{Y'_1}(A'_1) = \cO_{Y_1}(A_1) 
    \quad \text{and} \quad
    R^\bullet (g_1)_* \cO_{Y'_1} = \cO_{Y_1}.
  \end{displaymath}
  The second equality follows because $Y_1$ is nonsingular. For the
  first equality, note that over the generic point of each stratum of
  $E_1$ one has $\cO_{Y'_1}(A'_1)=\cO_{Y'_1}$. Indeed, in this case
  $A'_1$ is the strict preimage of $A_1$, and $E_1\cup\Supp A_1$ is a
  normal crossing divisor.  Therefore, $R^i (g_1)_* \cO_{Y'_1}(A'_1) =
  0$ for $i>0$ at the generic point of each stratum of $E_1$. Hence,
  by \eqref{t-tor}, $R^i (g_1)_* \cO_{Y'_1}(A'_1) = 0$ for $i>0$.

  By making such sequences of blowups $g_1,g_2$, we can assume that
  $Y'_1, Y'_2$ have the same places of non-klt singularities of
  $(X,B)$ and that the birational map $\phi:Y'_1 \dasharrow Y'_2$ is an
  isomorphism at the generic point of each stratum of $E'_1$
  and $E'_2$. 

  Now by Hironaka there exists a sequence of blowups $h_1:\wY\to Y'_1$
  and a regular map $h_2:\wY\to Y'_2$ resolving the indeterminacies of 
  $\phi$. If the blowups $h_1$ are performed only inside the
  nonregular locus of $\phi$, as it can always be done, then $h_1,h_2$
  are isomorphisms at the generic point of each stratum of
  $\wE$. Applying \eqref{t-tor} again, we get 
  \begin{displaymath}
    R^\bullet (h_k)_* \cO_{\wY}(\wA) = \cO_{Y'_k}(A'_k) 
    \quad \text{and} \quad
    R^\bullet (h_k)_* \cO_{\wY} = \cO_{Y'_k} 
    \quad \text{for } k=1,2.
  \end{displaymath}
  Putting this together, we get 
  \begin{displaymath}
    R^\bullet (f_k\circ g_k\circ h_k)_* \cO_{\wY}(\wA) = 
    R^\bullet (f_k)_* \cO_{Y_k}(A_k)
    \quad \text{for} \quad k=1,2.
  \end{displaymath}
  Since $f_1\circ g_1\circ h_1 = f_2\circ g_2\circ h_2$, we get 
  \begin{displaymath}
    R^\bullet (f_1)_* \cO_{Y_1}(A_1) = R^\bullet (f_2)_* \cO_{Y_2}(A_2) 
    \quad \text{and} \quad
    R^\bullet (f_1)_* \cO_{Y_1}      = R^\bullet (f_2)_* \cO_{Y_2}.
  \end{displaymath}
  Now if we choose $Y_1$ to be as in Corollary~\ref{c-res1} then we get
  the same conclusion for any other resolution $Y_2$. 
\end{proof}

\subsection{A resolution separated into levels}
\label{sec:horizontal-resolution}

For any $l\geq 0$, let $E'_{\geq l}$ (resp. $E'_{= l}$ and $E'_{\leq
  l}$) be the sum of the components of $E'$ whose image via $f':Y'\to X$ has
dimension at least (resp. equal to and less or equal to) $l$. We use a
similar notation for $E$.

\begin{proposition}\label{c-res} In Theorem~\ref{t-res}
  we may assume that the dimension of the image via $f$ of any stratum
  of $E_{\geq l}$ is at least $l$.
\end{proposition}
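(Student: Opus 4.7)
The plan is to begin with any log resolution $f\colon Y\to X$ of $(X,B)$ produced by Theorem~\ref{t-res} and perform further blowups along smooth strata of $E$, so that at the end we can re-apply Theorem~\ref{t-res} to recover the full conclusion. Call a stratum $W$ of $E_{\geq l}$ \emph{bad} if $\dim f(W)<l$. Since $E$ has finitely many components, it has only finitely many strata, hence only finitely many bad strata across all values of $l$; this finiteness will drive termination.

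Choose a bad stratum $W$ of $E_{\geq l}$ of \emph{minimal dimension} among all bad strata, and write $W$ as a component of $E_{i_1}\cap\cdots\cap E_{i_k}$ with every $E_{i_j}\in E_{\geq l}$. Since $(Y,E+\Delta)$ is SNC, $W$ is smooth of codimension $k$ in $Y$; let $\pi\colon\tilde Y\to Y$ be the blowup along $W$, with exceptional divisor $E_W$, and set $\tilde f=f\circ\pi$. Then $\tilde f(E_W)=f(W)$ has dimension $<l$, so $E_W\notin\tilde E_{\geq l}$, and the components of $\tilde E_{\geq l}$ are precisely the strict transforms of the components of $E_{\geq l}$. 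In the standard local model for the blowup of a codimension-$k$ SNC stratum, the strict transforms $\tilde E_{i_1},\dots,\tilde E_{i_k}$ meet each $\bP^{k-1}$-fiber of $E_W$ in $k$ distinct hyperplanes whose common intersection is empty; hence the component $W$ of $E_{i_1}\cap\cdots\cap E_{i_k}$ has been eliminated from the stratification.

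Every remaining stratum of $\tilde E_{\geq l}$ is the strict transform $\tilde S$ of a stratum $S\ne W$ of $E_{\geq l}$, with $\tilde f(\tilde S)=f(S)$; so a new bad stratum on $\tilde Y$ could only come from an old bad stratum of strictly greater dimension, contradicting our minimality choice. Thus the number of bad strata strictly decreases, and iterating finitely often produces a log resolution satisfying the required stratum--dimension property. Applying Theorem~\ref{t-res} to this resolution yields the desired $Y'$ and $\nu$, since the non-klt centers of $(X,B)$ are not disturbed by blowups of strata of $E$ (any discrepancy-$(-1)$ valuation introduced has image $f(W)\subseteq f(E_{i_j})$, which is already a non-klt center). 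The main obstacle is precisely the verification that no new bad strata appear after the blowup; this rests on the compatibility $\tilde f(\tilde S)=f(S)$ together with the minimal-dimension choice of $W$.
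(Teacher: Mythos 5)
The serious gap is your last step. Proposition~\ref{c-res} asserts the stratum--dimension property \emph{together with} all of the conclusions (1)--(4) of Theorem~\ref{t-res}; in particular you must exhibit, for your prepared resolution, a \emph{morphism} $\nu$ to a dlt model $Y'$ with $\nu_*A=0$ which is an isomorphism at the generic point of every stratum of $E$. ``Re-applying Theorem~\ref{t-res}'' does not provide this: the theorem is not a statement that takes a prescribed log resolution as input --- in its proof the resolution $Y$ is built \emph{after} $Y'$, as a Szab\'o-type log resolution of $(Y',E'+\Delta')$, and that new $Y$ has no reason to retain your stratum--dimension property. Nor can you keep the original $Y'$ and compose: after blowing up a stratum $W$ of codimension $k$, the exceptional divisor $E_W$ has discrepancy $(k-1)-k=-1$, so it is a new component of $\hat E$, and the new strata inside $E_W$ map into $W$; at their generic points the composite $\hat Y\to Y\to Y'$ is not an isomorphism, so property (4) fails. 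Supplying the compatible dlt model for the prepared resolution is precisely the technical second half of the paper's proof (a relative MMP over $Y'$ with boundary $(1-\epsilon)(E'+\Delta')$, Szab\'o's open subset $U\subset Y'$, and further blowups over $Y'\setminus U$ to make $\tilde\nu$ a morphism); nothing in your proposal replaces it, and your parenthetical remark about discrepancy-$(-1)$ valuations only shows that the non-klt centers of $(X,B)$ are unchanged, which was never the issue.

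Your first half also has a gap, though a reparable one. You only inspect strata of $\tilde E_{\geq l}$ at the single level $l$ of the chosen bad stratum $W$. But since $E_W$ is a component of $\tilde E$ with $\dim\tilde f(E_W)=\dim f(W)$, it belongs to $\tilde E_{\geq l'}$ for every $l'\leq\dim f(W)$, and the blowup creates genuinely new strata at those lower levels, namely components of $E_W\cap\tilde E_{j_1}\cap\cdots$; these are not strict transforms of old strata and must be checked for badness. This can be done: such a component maps onto a component $C$ of $W\cap E_{j_1}\cap\cdots$, which is an old stratum of $E_{\geq l'}$ of dimension $<\dim W$ (or onto $W$ itself, whose image already has dimension $\geq l'$), so your minimality choice applies --- but this is exactly the verification your write-up omits, and the sentence about ``an old bad stratum of strictly greater dimension'' does not supply it. For comparison, the paper sidesteps all of this bookkeeping: it passes to a log resolution that also resolves (principalizes) the non-klt centers of $(X,B)$; a stratum of $\tilde E_{\geq l}$ with image of dimension $<l$ would then be contained in an additional component of the simple normal crossings divisor lying over a small non-klt center, which is impossible by the SNC condition. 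Your blowup-and-terminate scheme could be completed into a correct proof of that first half, but as written both the badness analysis at lower levels and the entire reconstruction of $(Y',\nu)$ are missing.
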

\begin{proof} Let $\mu :\tilde Y \to Y$ be a log resolution of $(Y,E+\Delta)$ and of any non-klt center $V$ of $(X,B)$.
We then have that the dimension of the image via $\tilde f=f\circ \mu $ of any stratum of $\tilde E_{\geq l}$ is at least $l$. 
Notice in fact that if this is not the case, then there are divisors $F_1,\ldots,F_k$ given by components of $\tilde {E}_{\geq l}$ such that 
$W=F_1\cap \ldots \cap F_k$ is a non-klt center of $(\tilde{Y},\tilde{E}+\tilde{ \Delta})$ with $\dim \tilde{f}(W)<l$. But then as $\tilde{f}(W)$ is a non-klt center, $W$ is contained in a component of $\tilde {E}$ different from $F_1,\ldots,F_k$.
As $\tilde E$ has simple normal crossings, this is impossible.

We must now show that there are morphisms $\tilde \nu : \tilde Y \to \tilde Y '$ and $\eta : \tilde Y ' \to Y'$ such that $\nu \circ \mu= \eta\circ\tilde \nu$, $\tilde \nu _* (\tilde E +\tilde \Delta )=\tilde E '+\tilde \Delta' $ where $(\tilde Y ' ,\tilde E '+\tilde \Delta' )$ is dlt, $\tilde A'=0$ and 
that $\tilde \nu$ is an isomorphism at the generic point of any non-klt center of  $(\tilde Y  ,\tilde E +\tilde \Delta )$.

Let $U$ be an open subset of $Y'$ which is isomorphic to an open subset of $Y$ such that $(E'+\Delta ')|_U$
has simple normal crossings support and for any divisor $F$ exceptional over $Y'$ with center contained in $Z=Y'-U$, we have $a(F,Y',E'+\Delta ')>-1$.
We may assume that if $G\subset \tilde {Y}$ is a $\mu$-exceptional divisor such that $a (G,Y, E+\Delta )>-1$, then $\mu (G)\subset Z$.
For any $0<\epsilon \ll 1$, we write $\nu ^*(K_{Y'}+(1-\epsilon)(E'+\Delta ')) +F=K_Y+\Gamma$
where $F$ and $\Gamma$ are effective with no common components and $\nu _* (\Gamma )=(1-\epsilon)(E'+\Delta ')$ and we let $K_{\tilde Y}+\tilde \Gamma =\mu ^*\nu ^*((K_{Y'}+(1-\epsilon)(E'+\Delta ')) +F)+\tilde F '$.
We now run a $(K_{\tilde Y}+\tilde \Gamma )$-MMP over $Y'$ to obtain $\eta : \tilde Y ' \to Y'$.

Since over $U$, $F$ and $\tilde F'$ are zero, we have that 
$\tilde \nu$ is an isomorphism over $U$.
There is a neighborhood of $Z$ over which $(Y',E'+\Delta ')$ is klt. It follows that $(\tilde Y',\tilde E '+\tilde \Delta ')$ is dlt.
After blowing up centers over $Z$, we may assume that $\tilde \nu $ is a morphism.
\end{proof}

\section{The sheaves $R^if_*\cO_Y$ and the proof of the main theorem}

\subsection{Leray spectral sequence}
\label{sec:leray}

\begin{lemma}\label{lem:leray}
  Let $X$ be a projective normal variety of dimension $\ge n$ and $f:Y\to X$ a resolution of singularities.  Assume
  that the sheaves $R^if_*\cO_Y$, $i>0$, are $C_{n-1-i}$.  Then $X$
  is $C_{n}$.
\end{lemma}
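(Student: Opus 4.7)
The plan is to verify $X$ is $C_n$ via the cohomological criterion of Lemma~\ref{lem:Hartshorne}: fix any ample invertible sheaf $L$ on $X$; it suffices to show that $H^p(X,\cO_X(-sL))=0$ for all $p<n$ and all $s\gg 0$. The natural tool is the Leray spectral sequence
\[
E_2^{p,q}=H^p(X, R^qf_*\cO_Y\otimes\cO_X(-sL))\Rightarrow H^{p+q}(Y,\cO_Y(-sf^*L)),
\]
where I use the projection formula since $\cO_X(-sL)$ is locally free. Normality of $X$ gives $f_*\cO_Y=\cO_X$, so the bottom row $E_2^{p,0}=H^p(X,\cO_X(-sL))$ is precisely what we want to kill.

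Two vanishing inputs feed into the spectral sequence. First, the hypothesis says $R^qf_*\cO_Y$ is $C_{n-1-q}$ for $q\ge 1$, so applying Lemma~\ref{lem:Hartshorne} to these coherent sheaves twisted by $\cO_X(-sL)$ gives
\[
E_2^{p,q}=0 \quad\text{for } q\ge 1,\ p<n-1-q,\ s\gg 0.
\]
Second, $Y$ is smooth and $f^*L$ is big and nef (since $L$ is ample and $f$ is birational), so Kawamata--Viehweg vanishing plus Serre duality on $Y$ give $H^p(Y,\cO_Y(-sf^*L))=0$ for $p<\dim Y$. Since $\dim Y=\dim X\ge n$, the abutment vanishes in the range $p<n$.

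Now the spectral sequence bootstrap: on one hand $E_\infty^{p,0}$ is a subgroup of $H^p(Y,\cO_Y(-sf^*L))$, which vanishes for $p<n$; on the other hand $E_\infty^{p,0}$ is the quotient of $E_2^{p,0}$ by the images of the incoming differentials $d_r:E_r^{p-r,r-1}\to E_r^{p,0}$, $r\ge 2$. Each such source is a subquotient of $E_2^{p-r,r-1}$, which (with $q=r-1\ge 1$) vanishes exactly when $p-r<n-1-(r-1)=n-r$, i.e.\ when $p<n$. So for $p<n$ every incoming differential vanishes, forcing $E_2^{p,0}=E_\infty^{p,0}=0$, and Lemma~\ref{lem:Hartshorne} concludes that $X$ is $C_n$. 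The main thing to be careful about is the direction of the spectral sequence inference: without killing the incoming differentials we would only get $E_\infty^{p,0}\hookrightarrow H^p(Y,\cO_Y(-sf^*L))$, which is too weak. The numerical matching of the shift $n-1-q$ in the hypothesis with $p-r<n-r$ is precisely what makes this bootstrap succeed, so the statement of the lemma is essentially tight in how the $C$-conditions must decrease along $q$.
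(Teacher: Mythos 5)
Your proof is correct and follows essentially the same route as the paper: the Leray spectral sequence for $f$ twisted by $-sL$, the cohomological criterion of Lemma~\ref{lem:Hartshorne} applied both to the hypothesis on $R^qf_*\cO_Y$ and to the conclusion, and Kawamata--Viehweg vanishing for the abutment. Your explicit check that the incoming differentials into $E_r^{p,0}$ die for $p<n$ is exactly the "inspection" the paper leaves implicit, so nothing further is needed.
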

\begin{proof}
  Let $L$ be an ample sheaf on $X$.
  Consider the Leray spectral sequence
  \begin{displaymath}
    E_2^{p,q} = H^p(R^qf_* \cO_Y(-sL)) \Rightarrow
    H^{p+q}(\cO_Y(-sf^*L)), \quad 
    \text{for some }s\gg0. 
  \end{displaymath}  
  Since $f^*L$ is big and nef, the limit
  $E_{\infty}^k = H^k(\cO_Y(-sf^*L))$ is zero for $k<n\le\dim X$ by
  the Kawamata-Viehweg vanishing theorem.

  By the assumption, we have $E_2^{p,q}=0$ for $p+q\le n-2$ and $q>0$.
  Inspecting this spectral sequence we easily conclude that
  $E_2^{p,0}=E_{\infty}^{p,0}$ for $p\le n-1$.  On the other hand, we
  have $E_{\infty}^{p,0}\subset E_{\infty}^{p}$ and the latter is zero
  for $p\le n-1$. So $H^p(\cO_X(-sL))=0$ for $p\le n-1$ and $\cO_X$ is
  $C_n$ by~\eqref{lem:Hartshorne}.
\end{proof}

\begin{remark}
  For the $C_3$ case, \cite[3.1]{AlexeevLimits} gives a necessary and
  sufficient condition: $X$ is $C_3$ $\Leftrightarrow$ $R^1f_*\cO_Y$ is $C_1$.
\end{remark}

To prove Theorem~\ref{thm:main}, it is now sufficient to
prove that $H^p(X, R^qf_*\cO_Y(-sL))=0$ for $q>0$, $p+q\le d$
and
$s\gg0$. The rest of this section will be devoted to establishing this
fact.

\subsection{Vanishing theorems for  unions of centers}
\label{sec:vanishing-for-centers}

Let $(X,B)$ be a lc pair and let $f':Y'\to X$ and $\nu :Y\to Y'$ be as
in Section~\ref{sec:horizontal-resolution}. In particular, we may
assume that $Y$ and $Y'$ satisfy \eqref{t-res} and \eqref{c-res}.
\begin{lemma}\label{l-0} Let $Z$ and $W$ be unions of non-klt centers of $(Y,E+\Delta )$ and let 
$Z'=\nu (Z)$, $W'=\nu (W)$.  
Then $$R^\bullet \nu _*\OO _{Z}(-W)=R^\bullet \nu _*\OO
_{Z}(A-W)=\OO_{Z'}(-W'),$$
where $\OO _{Z'}(-W')$ is the ideal sheaf of $W'\cap Z'$ in $Z'$.
\end{lemma}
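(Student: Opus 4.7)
The plan is to mirror the proof of Corollary~\ref{c-res1}, combining Ambro--Fujino torsion-freeness with the fact from Theorem~\ref{t-res}(4) that $\nu$ is an isomorphism at the generic point of every non-klt center of $(Y,E+\Delta)$. The new features here—the restriction to $Z$ and the twist by the ideal of $W\cap Z$—are accommodated because $Z$, $W$, and $W\cap Z$ are each unions of strata of the snc pair $(Y,E+\Delta)$ and so inherit natural qlc structures by Propositions~\ref{p-qlc1}--\ref{p-qlc2}, whose qlc centers are the non-klt centers of $(Y,E+\Delta)$ contained in them.

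I would first establish the $A$-version $R^\bullet\nu_*\cO_Z(A-W)=\cO_{Z'}(-W')$. The strategy is to prove $R^\bullet\nu_*\cO_Z(A)=\cO_{Z'}$ and $R^\bullet\nu_*\cO_{W\cap Z}(A)=\cO_{W'\cap Z'}$, and then take $R\nu_*$ of the short exact sequence
\[
0\to \cO_Z(A-W)\to \cO_Z(A)\to \cO_{W\cap Z}(A)\to 0.
\]
For the first identity, let $B_Z$ be the boundary on $Z$ induced by snc adjunction, so that $K_Z+B_Z=(K_Y+E+\Delta)|_Z$. Then $L=A|_Z$ satisfies
\[
L-(K_Z+B_Z)=-f^*(K_X+B)|_Z=-\nu^*\bigl((f')^*(K_X+B)\bigr)|_Z,
\]
which is pulled back from $Z'$, in particular semiample over $Z'$. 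By the qlc extension of Theorem~\ref{t-tor}(1) (cf.\ \cite{AmbroQLvars}, \cite{Fujino09}), every associated prime of $R^i\nu_*\cO_Z(A)$ is the image of a qlc center of $Z$; but by Theorem~\ref{t-res}(4), $\nu$ is an isomorphism at every such generic point, forcing $R^i\nu_*\cO_Z(A)=0$ for $i>0$ and $\nu_*\cO_Z(A)=\cO_{Z'}$. The identical argument with $Z$ replaced by $W\cap Z$ gives the second identity, and the long exact sequence then yields $R^\bullet\nu_*\cO_Z(A-W)=\cO_{Z'}(-W')$, the connecting map $\cO_{Z'}\to\cO_{W'\cap Z'}$ being simply the restriction-quotient.

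For the companion identity $R^\bullet\nu_*\cO_Z(-W)=\cO_{Z'}(-W')$, the same SES argument applied to $0\to\cO_Z(-W)\to\cO_Z\to\cO_{W\cap Z}\to 0$ reduces matters to $R^\bullet\nu_*\cO_Z=\cO_{Z'}$ and $R^\bullet\nu_*\cO_{W\cap Z}=\cO_{W'\cap Z'}$. These do not follow from Theorem~\ref{t-tor} applied with $L=0$, and this is the main technical obstacle. My plan is to bridge back to the $A$-version via the short exact sequence
\[
0\to \cO_Z\to \cO_Z(A|_Z)\to \cO_{A|_Z}(A|_Z)\to 0,
\]
which is valid because $A+E+\Delta$ has snc support and $A$ shares no components with $E$, so $A|_Z$ is an effective Cartier divisor on $Z$. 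One then shows $R^i\nu_*\cO_{A|_Z}(A|_Z)=0$ for $i>0$ by a further application of Ambro--Fujino to the components of $A|_Z$. Alternatively, Fujino's relative vanishing for qlc pairs (a relative form of Proposition~\ref{p-qlc2}, cf.~\cite{Fujino09}) yields $R^\bullet\nu_*\cO_Z=\cO_{Z'}$ directly, and the same SES argument concludes the companion identity.
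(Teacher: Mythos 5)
Your first half (the $A$-twisted identity) is essentially the paper's argument: torsion-freeness \eqref{t-tor}(1) together with the fact that $\nu$ is an isomorphism at the generic point of every non-klt center kills $R^{>0}\nu_*\cO_Z(A)$, and the qlc package (\eqref{p-qlc2} and condition (2) of \eqref{d-qlc}) identifies $\nu_*\cO_Z(A)$ with $\cO_{Z'}$; the paper organizes this by a double induction (on the maximal dimension of a component of $Z$ and on the number of its components), which also has the advantage that \eqref{t-tor} is only ever applied to an irreducible stratum, which is honestly a global embedded snc pair, rather than to a reducible, possibly mixed-dimensional $Z$.

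The untwisted half is where your proposal has a genuine gap. The Ambro--Fujino machinery inherently produces the $A$-twisted statements: on a union of strata one has $(K_Y+E+\Delta)|_Z=(f^*(K_X+B)+A)|_Z\equiv_{\nu}A|_Z$ with $A$ a nonzero effective $\nu$-exceptional divisor, so $-(K_Z+B_Z)$ is not $\nu$-semiample and a ``relative form of \eqref{p-qlc2}'' with $L=0$ simply does not apply; what that formalism gives is exactly $\nu_*\cO_Z(\rup -(B_Z^{<1}).)=\nu_*\cO_Z(A)\cong\cO_{Z'}$ and vanishing for the $A$-twist, not $R^\bullet\nu_*\cO_Z\cong\cO_{Z'}$. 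Your first bridge is likewise unjustified: $A|_Z$ is in general a non-reduced scheme supported on $\nu$-exceptional divisors, $\cO_{A|_Z}(A|_Z)$ is not of the adjunction shape $K+\text{boundary}+(\nu\text{-semiample})$ to which \eqref{t-tor} could be applied, and even granting $R^{>0}\nu_*\cO_{A|_Z}(A|_Z)=0$ you would still need surjectivity of $\nu_*\cO_Z(A)\to\nu_*\cO_{A|_Z}(A|_Z)$ to kill $R^1\nu_*\cO_Z$, and the degree-zero identification $\nu_*\cO_Z=\cO_{Z'}$, which presupposes that $Z'$ is normal --- a fact you never establish. The missing input, and the paper's actual route, is dlt adjunction: since $(Y',E'+\Delta')$ is dlt, each irreducible stratum $Z'$ is normal with rational singularities (cf.\ the use of \eqref{t-klt} and its adjunction analogue in the paper), and $\nu|_Z\colon Z\to Z'$ is a proper birational morphism from a smooth variety, so $R^\bullet\nu_*\cO_Z\cong\cO_{Z'}$; the reducible case and the $-W$ twist are then handled by the double induction, using the surjection $\cO_{Z'}\cong\nu_*\cO_Z(A)\to\nu_*\cO_{Z\cap W}(A)\cong\cO_{Z'\cap W'}$ to split the pushed-forward exact sequences.
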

\begin{proof} If $Z$ is irreducible, then $(Z,(K_Y+E+\Delta)|_Z)$ is a global embedded simple normal crossings pair.
For $j>0$, $R^j \nu _*\OO _{Z}(A)=0$ at the generic point of any non-klt
center of $(Y',E'+\Delta' )$ (cf. \eqref{t-res}, \eqref{c-2}) 
and hence $R^j \nu _*\OO _{Z}(A)=0$ by \eqref{t-tor}. Since $[Z',(K_{Y'}+E'+\Delta')|_{Z'}]$ is a qlc variety cf. \eqref{p-qlc2},
we also have $\nu _*\OO _{Z}(A)=\OO _{Z'}$ cf. (2) of \eqref{d-qlc}.

We proceed by induction on $d$ the maximum dimension of a component of $Z$,
the case $d=0$ being obvious. 
For a fixed $d$, we 
proceed by induction on the number of components of $Z$.
If $Z$ is irreducible, then as $(Y',E'+\Delta ')$ is dlt, $Z'$ is normal with rational singularities and
$R^\bullet \nu _* \OO _Z\cong \OO _{Z'}\cong R^\bullet \nu _* \OO _Z(A)$. By induction on $d$, $R^\bullet \nu _*\OO _{Z\cap W}\cong R^\bullet \nu _*\OO _{Z\cap W}(A)\cong \OO _{Z'\cap W'}$.
Pushing forward the short exact sequence $$0\to \OO _Z(A-W)\to \OO _Z(A)\to \OO _{Z\cap W}(A)\to 0$$ and noticing that $$\nu _* \OO _Z=\nu _* \OO _Z(A)\cong \OO _{Z'}\to \nu _* \OO _{Z\cap W}=\nu _* \OO _{Z\cap W}(A)\cong \OO _{Z'\cap W'}$$ is surjective,
it follows that $R^\bullet \nu _* \OO _Z(-W)\cong R^\bullet \nu _* \OO _Z(A-W)\cong \OO _{Z'}(-W')$.

If $Z$ is not irreducible, then let $Z_0$ be an irreducible component of $Z$ and write $Z=Z_0+Z_1$ where $Z_1$
is the union of the components of $Z$ distinct from $Z_0$.
Consider the short exact sequence
$$0\to \OO _{Z_0}(A-W-Z_1)\to \OO _{Z}(A-W )\to \OO _{Z_1 }(A-W )\to 0.$$
By induction on $d$ the number of components of $Z$, we have $R^\bullet \nu _* \OO _{Z_1 }(-W )\cong R^\bullet \nu _* \OO _{Z_1 }(A-W )\cong\OO _{Z'_1 }(-W' )$. By what we have shown above, $R^\bullet \nu _*\OO _{Z_0}(-W-Z_1)
\cong R^\bullet \nu _*\OO _{Z_0}(A-W-Z_1)
\cong \OO _{Z'_0}(-W'-Z'_1)$. It follows that $R^\bullet \nu _* \OO
_Z(-W)\cong R^\bullet \nu _* \OO _Z(A-W)\cong \OO _{Z'}(-W')$. The
assertion is proved.
\end{proof}
Let $L$ be an ample line bundle on $X$.
\begin{lemma}\label{l-1} If $X$ is projective, then $H^j(\OO _{Y'}(-s(f')^*L))=0$ for all $j<n$ and $s>0$.
\end{lemma}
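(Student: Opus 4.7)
The plan is to reduce the vanishing on $Y'$ to Kawamata--Viehweg vanishing on a smooth resolution of $Y'$, using rational singularities as the bridge.

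First, since $(Y',E'+\Delta')$ is dlt by Theorem~\ref{t-res}, Theorem~\ref{t-klt} tells us that $Y'$ has rational singularities. Applied to the resolution $\nu\colon Y\to Y'$ of Theorem~\ref{t-res} this means $R^0\nu_*\OO_Y=\OO_{Y'}$ and $R^i\nu_*\OO_Y=0$ for $i>0$. Combined with the projection formula for the line bundle $(f')^*L$ pulled back from $X$, and the fact that $f=f'\circ\nu$ so that $f^*L=\nu^*(f')^*L$, we get
$$R^\bullet\nu_*\OO_Y(-sf^*L)=\OO_{Y'}(-s(f')^*L)\otimes R^\bullet\nu_*\OO_Y=\OO_{Y'}(-s(f')^*L).$$

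The Leray spectral sequence for $\nu$ therefore collapses and yields the identification
$$H^j(Y',\OO_{Y'}(-s(f')^*L))\;=\;H^j(Y,\OO_Y(-sf^*L))$$
for every $j$ and every $s$.

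Now $Y$ is a smooth projective variety of the same dimension $n=\dim X$ as $X$ (it is a log resolution, so $f$ is a projective birational morphism and $X$ is projective by hypothesis), and $sf^*L$ is big and nef for each $s>0$, being the pullback of an ample divisor under a birational morphism. By the Kawamata--Viehweg vanishing theorem on $Y$,
$$H^{n-j}\bigl(Y,\,\omega_Y\otimes\OO_Y(sf^*L)\bigr)=0\qquad\text{for all }n-j>0,$$
and Serre duality on the smooth projective $Y$ converts this to $H^j(Y,\OO_Y(-sf^*L))=0$ for all $j<n$ and all $s>0$. Combined with the identification above, this is the assertion.

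I do not expect any real obstacle: the proof is essentially a two-step bookkeeping exercise, the first step being rational singularities of $Y'$ (which is where the earlier work of Theorem~\ref{t-res} is used) and the second being standard Kawamata--Viehweg on the smooth model $Y$. The only point to be slightly careful about is confirming that the higher direct images under $\nu$ really do vanish, but this is built into Theorems~\ref{t-res} and~\ref{t-klt}.
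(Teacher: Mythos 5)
Your proof is correct and follows essentially the same route as the paper: rational singularities of the dlt model $Y'$ (via Theorems~\ref{t-res} and~\ref{t-klt}) identify $H^j(\OO_{Y'}(-s(f')^*L))$ with $H^j(\OO_Y(-sf^*L))$, and then Serre duality plus Kawamata--Viehweg vanishing for the big and nef divisor $sf^*L$ on the smooth $Y$ gives the vanishing for $j<n$. The only difference is that you spell out the projection-formula/Leray bookkeeping that the paper leaves implicit.
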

\begin{proof} Since $(Y', \nu _*(E+\Delta)=E'+\Delta ')$ is dlt, it has rational singularities and so $R^\bullet \nu _* \OO _Y\cong \OO _{Y'}$. Therefore, by Serre duality, $$H^j(\OO _{Y'}(-s(f')^*L))\cong H^j(\OO _{Y}(-sf^*L)) \cong H^{n-j}(\OO _{Y}(K_Y+sf^*L))^\vee$$ and the lemma follows from Kawamata-Viehweg vanishing cf. \cite[4.3.7]{Lazarsfeld}.
\end{proof}

\begin{lemma}\label{l-2} Assume that $X$ is projective.
  Let $0\leq F\leq E$ be a reduced divisor such that the minimum of
  the dimension of the images of any stratum of $F$ in $X$ is $\geq
  k$. Let $Z$ and $W$ be unions of non-klt centers of $(Y,F)$.  Then
$$H^l(\OO _{Z}(-W-sf^*L))\cong H^l(\OO _{Z}(A-W-sf^*L))\cong H^l(\OO _{Z'}(-W'-s(f')^*L))=0$$ for all $l\leq k-1$ and any $s>0$. 
\end{lemma}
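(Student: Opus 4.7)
The plan is to first identify the three cohomology groups using Lemma~\ref{l-0}, and then to prove the vanishing by an inductive reduction to a single Kawamata--Viehweg-type vanishing on a stratum.

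For the three isomorphisms, I would observe that $sf^*L=s\nu^*(f')^*L$, so the projection formula combined with Lemma~\ref{l-0} yields
$$R^\bullet \nu_* \OO_Z(A-W-sf^*L) \;=\; R^\bullet\nu_*\OO_Z(-W-sf^*L) \;=\; \OO_{Z'}(-W'-s(f')^*L),$$
and the Leray spectral sequence then collapses to give the asserted isomorphisms of cohomology groups.

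For the vanishing of $H^l(Z,\OO_Z(-W-sf^*L))$ in degrees $l\leq k-1$ and any $s>0$, I would induct on the pair $(d,|Z|)$ in lexicographic order, where $d=\max_i\dim Z_i$ is the maximum dimension over the irreducible components of $Z$ and $|Z|$ is the number of components. The case $d=0$ forces $k=0$, so the range $l\leq -1$ is vacuous. For reducible $Z$, pick an irreducible component $Z_0$ and write $Z=Z_0+Z_1$; the short exact sequence
$$0\to \OO_{Z_0}(-W-Z_1-sf^*L)\to \OO_Z(-W-sf^*L)\to \OO_{Z_1}(-W-sf^*L)\to 0$$
from the proof of Lemma~\ref{l-0} reduces the claim, via the long exact sequence, to the irreducible case (with $W$ replaced by $W+Z_1$, whose strata are still non-klt centers of $(Y,F)$ with image dimension $\geq k$) and to $Z_1$ (which has fewer components), both covered by the inductive hypothesis. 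For $Z$ irreducible with $Z\not\subseteq W$, the intersection $W\cap Z$ is a union of strata of $F$ of dimension strictly less than $d$, so the inductive hypothesis applied to the pair $(W\cap Z,0)$ gives the required vanishing on $W\cap Z$; the sequence
$$0\to \OO_Z(-W-sf^*L)\to \OO_Z(-sf^*L)\to \OO_{W\cap Z}(-sf^*L)\to 0$$
then reduces everything to proving $H^l(Z,-sf^*L)=0$ for $l\leq k-1$. The case $Z\subseteq W$ is trivial since then $\OO_Z(-W-sf^*L)=0$.

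This last vanishing is the heart of the argument and I expect to be the main obstacle. The stratum $Z$ is smooth and projective (being an intersection of components of the SNC divisor $F\leq E$ on the smooth variety $Y$), and $f^*L|_Z=(f|_Z)^*(L|_{f(Z)})$ is nef with numerical dimension equal to $\dim f(Z)\geq k$, since $L|_{f(Z)}$ is ample on the image. The generalized Kawamata--Viehweg vanishing for nef line bundles of positive numerical dimension (cf.\ \cite{Lazarsfeld}) gives $H^l(Z,K_Z+sf^*L)=0$ for $l>\dim Z-k$, and Serre duality on the smooth $Z$ converts this into $H^l(Z,-sf^*L)=0$ for $l\leq k-1$ and any $s>0$, which completes the induction.
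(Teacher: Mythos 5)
Your proof is correct and follows essentially the same route as the paper: reduce to $H^l(\OO_Z(-W-sf^*L))=0$ via \eqref{l-0}, induct on the maximal dimension and number of components of $Z$ using the same two short exact sequences, and settle the irreducible case by Kawamata--Viehweg vanishing \cite[4.3.7]{Lazarsfeld} on the smooth stratum $Z$, where $f^*L|_Z$ is nef with $(f^*L|_Z)^k\neq 0$, combined with Serre duality. The only differences are expository (you spell out the Serre-duality step and the trivial case $Z\subseteq W$ explicitly), so no changes are needed.
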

\begin{proof} Since $R^\bullet \nu _*\OO _{Z}(-W)\cong R^\bullet \nu _*\OO _{Z}(A-W)\cong \OO _{Z'}(-W')$
  cf. \eqref{l-0}, it suffices to prove that $H^l(\OO
  _{Z}(-W-sf^*L))=0$.  If $Z$ is irreducible, then it is a smooth
  variety.  We have $(f^*L|_Z)^k\neq 0$ and $f^*L|_Z$
  is nef so that by Kawamata-Viehweg vanishing (cf.
  \cite[4.3.7]{Lazarsfeld}), we have $$h^l(\OO _{Z}(-sf^*L))=0\qquad
  \forall l\ \leq k-1 .$$ In general, the proof is by induction on the
  maximal dimension of a component of $Z$ and on the number of
  components of $Z$.  When $\dim Z=0$, there is nothing to prove.
  If $Z$
  is irreducible, then the statement follows from the short exact
  sequence
  $$ 0\to \OO _Z(-W)\to \OO _Z \to \OO _{Z\cap W}\to 0 $$
  and induction on $\dim Z\cap W$.

If $Z$ is not irreducible, then we let $Z_0$ be an irreducible component of $Z=Z_0\cup Z_1$ and we consider the short exact sequence
$$0\to \OO _{Z_0}(-W-Z_1)\to \OO _Z (-W)\to \OO _{Z_1}(-W)\to 0.$$
The statement now follows by induction on the number of components and what we have shown above.
\end{proof}
\begin{lemma}\label{l-3} If $X$ is projective and $K_X+B$ is Cartier and every non-klt center of $(X,B)$ has dimension $\geq d$,
then
  $H^i(R^jf_*\mathcal O _{E_{=k}}(A-E+E_{\geq k}-sf^*L))=0$ for $i+j\leq
  d-1$   and $s>0$ (resp. for $i+j=d$, 
  $j>0$ and $s>0$).
\end{lemma}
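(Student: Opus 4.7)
\emph{Plan.} The strategy is threefold: reduce to the dlt model $Y'$ via Lemma~\ref{l-0}, bound the abutment of the Leray spectral sequence on $E_{=k}$ using Lemma~\ref{l-2}, and then extract individual $E_2^{i,j}$ vanishings via Ambro's torsion-freeness. The key observation is that $A - E + E_{\geq k} = A - E_{<k}$, and both $Z = E_{=k}$ and $W = E_{<k}$ are unions of components of $E$, hence unions of non-klt centers of $(Y, E+\Delta)$. Therefore Lemma~\ref{l-0} combined with the projection formula yields the identification
\[
R^j f_* \mathcal O_{E_{=k}}(A - E_{<k} - sf^*L) \;\cong\; R^j f'_* \mathcal O_{E'_{=k}}\bigl(-E'_{<k} - s(f')^*L\bigr),
\]
so one can work indifferently on either side.

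To bound the abutment $H^l(E_{=k}, \mathcal O_{E_{=k}}(A - E_{<k} - sf^*L))$, use the short exact sequence
\[
0 \to \mathcal O_{E_{=k}}(A - E_{<k} - sf^*L) \to \mathcal O_{E_{=k}}(A - sf^*L) \to \mathcal O_{E_{=k} \cap E_{<k}}(A - sf^*L) \to 0.
\]
Lemma~\ref{l-2} applies to the middle term with $F = E_{\geq k}$, whose strata have image dimension $\geq k$ by Proposition~\ref{c-res}, giving $H^l = 0$ for $l \leq k-1$; and to the quotient with $F = E$, whose strata have image dimension $\geq d$ by the hypothesis that every non-klt center of $(X,B)$ has dimension $\geq d$, giving $H^l = 0$ for $l \leq d - 1$. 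The long exact sequence then yields
\[
H^l\bigl(E_{=k}, \mathcal O_{E_{=k}}(A - E_{<k} - sf^*L)\bigr) = 0 \quad \text{for } l \leq \min(d, k-1).
\]
Running the Leray spectral sequence $E_2^{i,j} = H^i(X, R^jf_*\mathcal F) \Rightarrow H^{i+j}(E_{=k}, \mathcal F)$ for $\mathcal F$ our sheaf, and using Corollary~\ref{cor:assoc-primes} applied to the embedded SNC pair $(E_{=k}, E_{>k}|_{E_{=k}})$ (here $K_X+B$ Cartier forces $\Delta = 0$ on the resolution) to see that each associated prime of $R^j f_* \mathcal O_{E_{=k}}(A - E_{<k})$ is the image of a stratum of dimension $\geq k$, one extracts the desired individual $E_2^{i,j}$ vanishings from the abutment bound by downward induction on $j$ to control the spectral sequence differentials.

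The main obstacle is the tight case $k = d$ with $i + j = d$, $j > 0$: here $E_{<d} = 0$ so $\mathcal F = \mathcal O_{E_{=d}}(A - sf^*L)$, but the Step~2 bound only gives abutment vanishing up to $l \leq d - 1$, one short of what the Leray argument requires. Resolving this demands a supplementary vanishing, most naturally by applying Ambro's Theorem~\ref{t-tor}(2) directly to the embedded SNC pair $(E_{=d}, E_{>d}|_{E_{=d}})$ combined with Serre duality on $X$, converting the $-sf^*L$ twist into a $+sf^*L$ setup in which $H = -(K_X+B) + sL$ is big and nef on images of strata for $s$ large.
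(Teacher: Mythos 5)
Your reduction steps (rewriting $A-E+E_{\geq k}$ as $A-E_{<k}$, transferring via Lemma~\ref{l-0}, and bounding the abutment $H^l(\cO_{E_{=k}}(A-E_{<k}-sf^*L))$ by Lemma~\ref{l-2}) are in the spirit of the paper, but the decisive step is missing. Passing from vanishing of the abutment to vanishing of the individual Leray terms $H^i(R^jf_*\cO_{E_{=k}}(A-E_{<k}-sf^*L))$ cannot be achieved by ``downward induction on $j$ to control the differentials'' together with Corollary~\ref{cor:assoc-primes}: the associated-prime statement only constrains the \emph{supports} of the sheaves $R^jf_*(\cdot)$, not the cohomology of their negative twists, and nothing prevents nonzero terms $E_2^{i,j}$ in the relevant range from being killed by spectral sequence differentials, hence never appearing in the (vanishing) abutment. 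What is actually needed is a degeneration/splitting statement of Koll\'ar type, and this is precisely the technical heart of the paper's proof: after splitting $E_{=k}$ into the pieces $Z_t$ over the $k$-dimensional centers (using Proposition~\ref{c-res} to see that the sequences $(\star)$ split) and passing to a prepared fibration $\tilde f:\tilde Z\to \tilde V$, the paper constructs, via Kawamata's theory of variations of \emph{mixed} Hodge structures (\cite[5.1]{KawamataVMHS}, \cite{Kaw11}) and a multilevel induction on the index sets $I,J,N,L$, filtrations of $R^j\tilde f_*\cO_{\tilde Z}(\tilde M)$ whose graded pieces are direct sums of simple summands of sheaves $R^j\tilde f_*\cO_{\tilde Z^L\cap\tilde G^N}(\tilde M-\tilde G[\bar N]-\tilde Z[\bar L])$; these are of the form $R^j\tilde f_*\omega$ of smooth strata, so Koll\'ar's theorems \cite{KollarDirectImages2} (splitting $R^\bullet\tilde f_*=\sum R^i\tilde f_*[-i]$ and torsion-freeness) apply and convert the abutment vanishing supplied by Lemma~\ref{l-2} into the individual vanishings. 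Your proposal contains no substitute for this Hodge-theoretic mechanism, so the core of the lemma is unproved.

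Two further points. First, your fix for the boundary case $k=d$, $i+j=d$, $j>0$ does not fit the hypotheses you invoke: Theorem~\ref{t-tor}(2) requires $L-(K_Y+B)\sim_{\bQ}f^*H$ with $H$ nef and big on images of strata, whereas your sheaf carries the anti-ample twist $-sf^*L$, and ``Serre duality on $X$'' is not available in the naive form since $X$ need not be Cohen--Macaulay (that is essentially what the main theorem is about). The paper instead argues on the smooth varieties $\tilde Z^L\cap\tilde G^N$ and $\tilde V$: Serre duality there, Koll\'ar's $R^{f}\tilde f_*\omega_{\tilde Z^L\cap\tilde G^N}\cong\omega_{\tilde V}$, and torsion-freeness (a generically zero section of a torsion-free sheaf vanishes) handle the two subcases according to whether $(\tilde G[\bar N]+\tilde Z[\bar L])|_{\tilde Z^L\cap\tilde G^N}$ is zero or not. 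Second, smaller issues: the lemma asserts vanishing for \emph{all} $s>0$, which your ``for $s$ large'' bigness arguments would not give; and $K_X+B$ Cartier does not force $\Delta=0$ on the resolution (it makes the discrepancies integral, but the strict transform of a fractional $B$ can still contribute to $\Delta$), so the appeal to Corollary~\ref{cor:assoc-primes} should be set up without that simplification.
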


The proof given below is fairly technical. We use the prepared resolution of
Section~\ref{sec:horizontal-resolution}, previously established
vanishing results, and a multilevel induction. At the heart of the argument,
however, is the method of Koll\'ar introduced in
\cite{KollarDirectImages2}, which gives vanishing and torsion free
theorems for the sheaves $R^j f_* \omega_Y$ by using variation of pure
Hodge structures, and the extension of this method to variation of
mixed Hodge structures in \cite{KawamataFiberSpaces, KawamataVMHS,
  Kaw11}. 

\begin{proof} We will beging by proving the required vanishing for
  $i+j\leq d-1$ and $s>0$.  Let $V_1,\ldots , V_\tau$ be the
  irreducible non-klt centers of $(X,B)$ of dimension $k$ and let
  $Z_t$ be the union of the components of $E_{=k}$ that dominate
  $V_t$. Note that $E-E_{\geq k}=E_{\leq k-1}$. Let $Z_{\geq
    t}=Z_t+\ldots +Z_\tau$.  We have short exact sequences 

  \begin{multline}\tag{$\star$}0\to \OO
  _{Z_t}(A-E_{\leq k-1}-Z_{\geq t+1})\to \OO _{Z_{\geq t}}(A-E_{\leq
    k-1})\\ \to \OO _{Z_{\geq t+1}}(A-E_{\leq k-1})\to 0.\end{multline} 
  Note that by
  \eqref{c-res}, $Z_t\cap Z_{\geq t+1}=\emptyset $, so that the sequences  $(\star )$ are split and we have the equalities $(A-E_{\leq
    k-1}-Z_{\geq t+1})|_{Z_t}\sim _{X}K_{Z_t}+E_{\geq k+1}|_{Z_t}$.
Since the
  above short exact sequence $(\star )$
is split, it remains exact (and split) after applying $R^jf_*$ and
  twisting by $-sL$.  Thus, it suffices to show that $H^i(R^jf_*\mathcal O
  _{Z_t}(A-E_{\leq k-1}-Z_{\geq t+1}-sf^*L))=0$ for $i+j\leq d-1$ and
  $s>0$.

  Let $Z=Z_t$ and $V=V_t$.  We may assume that there are resolutions
  $g:\tilde Z \to Z$ and $h:\tilde V\to V$ and there is a snc divisor
  $\Xi$ on $\tilde V$ such that $\tilde f:\tilde Z\to \tilde V$ is a
  morphism which is smooth over $\tilde V-\Xi$ and the same holds for
  any stratum of $ g^{-1}_* ( E_{\geq k+1}|_{Z})$ (cf. see for
  example the proof of \cite[2]{Kaw11} for a similar statement). We
  may also assume that $g$ is an isomorphism at the generic point of
  the strata of $(Z, E_{\geq k+1}|_{Z})$ so that $F:=K_{\tilde Z}+
  g^{-1}_* (E_{\geq k+1}|_{Z})- g^*(K_Z+E_{\geq k+1}|_{Z})$ is
  effective and $g$-exceptional.  Note also that if $W$ is any stratum
  of $(Z, E_{\geq k+1}|_{Z})$, then by the same argument, $F|_{\tilde
    W}$ is $g|_{\tilde W}$-exceptional.  We may further assume that
  $g$ and $h$ are given by sequences of blow ups along smooth centers
  and thus that they are induced by morphisms which (by abuse of
  notation) we also denote by $h:\tilde X\to X$ and $g:\tilde Y\to Y$.
  We write $K_{\tilde Y}+\tilde E-\tilde A=(f\circ g)^*(K_X+B)$. Note
  that by construction $\tilde Y\to \tilde X$ is an isomorphism over
  the complement of a proper closed subset of $V$ and so it is easy to
  see that we have $\tilde E _{\geq k+1}|_{\tilde Z}=g^{-1}_* (E_{\geq
    k+1}|_{Z})$.

  Let $M:=(A-E_{\leq k-1})|_{Z}$ and $\tilde M=g^*M+F\sim _X K_{\tilde
    Z}+\tilde E_{\geq k+1}|_{\tilde Z}$.  Since $F$ is $g$
  exceptional, $g_*\tilde M =M$.  
  By \eqref{t-tor}, $R^\bullet g_*\OO_{\tilde Z}(\tilde M ) \cong g_*\OO_{\tilde Z}(\tilde M)=\OO _Z(M)$ 
  and so it suffices to show that $H^i(R^j(f\circ g)_*\mathcal O
  _{\tilde Z}(\tilde M-s(f\circ g)^*L))=0$ for $i+j\leq d-1$ and
  $s>0$.

By \eqref{t-tor} $R^ih_* R^j\tilde f_* \mathcal O
  _{\tilde Z}(\tilde M )=0$ for $i>0$, and thus it suffices to show
  that $H^i(R^j\tilde f_* \mathcal O _{\tilde Z}(\tilde M-s(f\circ g)^*L))=0$ for
  $i+j\leq d-1$ and $s>0$.

To this end, let $ \tilde Z=\tilde  Z^1+ \ldots  +\tilde  Z^p$  and for any
multi-index $I=\{ i_1,\ldots , i_l \}\subset \{ 1,\ldots , p \}$ let
$\tilde Z^I=\tilde Z^{i_1}\cap \ldots \cap \tilde Z^{i_l}$ and $\tilde Z[I]=\tilde Z^{i_1}+\ldots
+\tilde Z^{i_l}$. 
Similarly let $\tilde E_{\geq k+1}=\tilde G_1+\ldots +\tilde G_\rho$ and for any
multi-index $I=\{ i_1,\ldots , i_l \}\subset \{ 1,\ldots , \rho \}$
let $\tilde G^I=\tilde G_{i_1}\cap \ldots \cap \tilde G_{i_l}$ and $\tilde G[I]=\tilde G_{i_1}+
\ldots + \tilde G_{i_l}$.  We use a similar notation for $Z$ and $ E_{\geq k+1}$.
It suffices to show that:
\begin{enumerate}
\item The sheaves $R^j\tilde f_* \mathcal O _{\tilde Z
  }(\tilde M )$ admit filtrations whose quotients are direct sums of
  summands of sheaves of the form $R^j\tilde f_* \mathcal O _{\tilde
    Z^L\cap \tilde G^N}(\tilde M-\tilde G[\bar N]-\tilde Z[\bar L])$
  where $N\subset \{ 1,\ldots ,\rho\}$, $L\subset \{ 1,\ldots ,p\}$,
  $\bar N=\{ 1,\ldots , \rho \} \setminus N$ and $\bar L =\{1,\ldots
  , p\}\setminus L$.
\item $H^i(R^j\tilde f_* \mathcal O _{\tilde Z^L\cap \tilde G^N}(\tilde  M-\tilde G[\bar
  N]-\tilde Z[\bar L]-s(f\circ g)^*L))=0$ for $i+j\leq d-1$ and $s>0$. \end{enumerate}

To see the first statement, notice that $(\tilde M-\tilde G[\bar
N]-\tilde Z[\bar L])|_{\tilde Z^L\cap \tilde G^N}\sim _{X} K_{\tilde Z^L\cap \tilde  G^N}$ and
hence the sheaves $R^j\tilde f_* \mathcal O _{\tilde Z^L\cap \tilde G^N}(\tilde M-\tilde G[\bar N]-\tilde Z[\bar L])$ are obtained as upper
canonical extensions of the bottom pieces in the Hodge filtration of a
variation of {\it pure} Hodge structures
cf. \cite{KollarDirectImages2}. Since pure Hodge structures are a semisimple category, these sheaves split as a direct sum of simple summands.

Consider now two disjoint sets $I,J\subset\{1,\dotsc,\rho \}$, 
$I\cap J=\emptyset$, and an index $\alpha \in \overline { I \cup
  J}$ in the complement of $I\sqcup J$. 
Let $I'=I\cup \alpha$ and $J'=J\cup \alpha$.

We have short exact sequences
\begin{multline*}\tag{$\sharp$}0\to \OO_{\tilde Z\cap \tilde G^{I}}(\tilde M-\tilde G[J'])\to \OO _{\tilde Z\cap \tilde G^I }(\tilde M-\tilde G[J])
\to \OO _{\tilde Z\cap \tilde G^{I'} }(\tilde M-\tilde G[J])\to 0.\end{multline*}
Proceeding by ascending induction on $|I|+|J|$ we may assume that the
claim (1) holds for $R^j\tilde f_*$ of the right and left hand sides of the above
short exact sequence.  Note that $(\tilde M-\tilde G[J])|_{\tilde Z\cap \tilde G^I
}\sim _{X }K_{\tilde Z\cap \tilde G^I }+\tilde G[\overline {J+I}]|_{\tilde Z\cap \tilde G^I }$, and so
by \cite[5.1]{KawamataVMHS} (see also \cite[15]{Kaw11}),
 we have that the sheaves $R^j\tilde f_*\OO
_{\tilde Z\cap \tilde G^I }(\tilde M-\tilde G[J])$ are obtained as upper
canonical extensions of the bottom pieces in the Hodge filtration of a
variation of {\it mixed} Hodge structures. Since pure Hodge structures
are a semisimple category,  a morphism of mixed Hodge structures to a simple Hodge structure is either surjective or trivial.
Pushing forward $(\sharp )$ and using the filtration on $R^j\tilde  f_*\OO
_{\tilde Z\cap \tilde G^{I'} }(\tilde M -\tilde G[J])$ first and then the filtration on $R^j\tilde f_*\OO
_{\tilde Z\cap \tilde G^I }(\tilde M-\tilde G[J'])$ (both filtrations exist by our inductive hypothesis), we obtain the required filtration on $R^j\tilde f_*\OO
_{\tilde Z\cap \tilde G^I }(\tilde M-\tilde G[J])$.
The claim now follows (once the base of the
induction has been verified).

We must now verify the base of the induction, i.e. that the claim holds for 
sheaves of the form $R^j\tilde f_*\OO _{\tilde Z\cap \tilde G^N }(\tilde M-\tilde G[\bar N])$ where $N\subset\{1,\dotsc,\rho \}$.
Note that $\tilde Z=\tilde Z[1,2,\ldots ,p]$.
Recall that $(\tilde M-\tilde G[\bar N])|_{\tilde Z\cap \tilde G^N }\sim _{X }K_{\tilde Z\cap \tilde G^N }$  
and so $(\tilde M-\tilde Z [\overline I]-\tilde G[\bar N])|_{\tilde Z[I]\cap \tilde G^N }\sim _{X }K_{\tilde Z[I]\cap \tilde G^N }$.   Consider the short exact sequences
$$0\to \mathcal O _{V}(-V\cap W)\to \mathcal O_{V+W}\to \mathcal O_{W}\to 0,$$
$$0\to \mathcal O _{W}\to \mathcal O_{W}(V)\to \mathcal O_{W\cap V}\to 0,$$
where $V=\tilde Z ^\alpha$, $W=\tilde Z[I]\cap \tilde Z^J\cap \tilde
G^N$, and index sets $I,J,\alpha,I',J'\subset\{1,\dotsc,p \}$ defined as above.

Tensoring the above sequences by the
line bundles $\mathcal O _{\tilde Z}(\tilde M-\tilde G [\overline
N]-\tilde Z [\overline{I'+J}])$ and $\mathcal O _{\tilde Z}(\tilde
M-\tilde G [\overline N]-\tilde Z [\overline{I+J}])$, we obtain (up to
linear equivalence over $X$) the following short exact sequences
$$0\to \omega_{\tilde Z[I]\cap\tilde Z^{J'}\cap \tilde G^N}\to \omega_{\tilde Z[I']\cap \tilde Z^J\cap \tilde G^N}\to \omega_{\tilde Z[I]\cap \tilde Z^J\cap \tilde G^N}(\tilde Z^\alpha)\to 0,$$
$$0\to \omega _{\tilde Z[I]\cap \tilde Z^J\cap \tilde G^N}\to \omega _{\tilde Z[I]\cap \tilde Z^J\cap \tilde G^N}(\tilde Z^\alpha)\to \omega _{\tilde Z[I]\cap \tilde Z^{J'}\cap \tilde G^N}\to 0.$$
By the above arguments, using the higher direct images of these exact sequences, and proceeding by descending induction on $|I|$, we reduce to the case that $|I|=1$ i.e. to the case $K_{\tilde Z^J\cap \tilde G^N}\sim _{X}(\tilde M-\tilde Z[\bar J]-\tilde G[\bar N])|_{\tilde Z^J\cap \tilde G^N}$ and the first statement follows.

We now prove the second statement. By \eqref{l-2} we have $H^i(\mathcal O _{Z^L\cap G^N}(M-G[\bar N]-Z[\bar L]-sf^*L))=0$ for $i\leq d-1$ and $s>0$. Notice that $g$ is an isomorphism on an open subset of ${\tilde Z^L\cap \tilde G^N}$, thus, by \eqref{t-tor}, $$R^\bullet (g|_{\tilde Z^L\cap \tilde G^N})_*\OO_{\tilde Z^L\cap \tilde G^N}(\tilde M-\tilde G[\bar N]-\tilde Z[\bar L])\cong (g|_{\tilde Z^L\cap \tilde G^N})_*\OO_{\tilde Z^L\cap \tilde G^N}(\tilde M-\tilde G[\bar N]-\tilde Z[\bar L]).$$
Moreover, since $g|_{\tilde Z^L\cap \tilde G^N}$ is birational and $F|_{\tilde Z^L\cap \tilde G^N}$ is $g|_{\tilde Z^L\cap \tilde G^N}$-exceptional, we have that  $$(g|_{\tilde Z^L\cap \tilde G^N})_*\mathcal O _{\tilde Z^L\cap \tilde G^N}(\tilde M-\tilde G[\bar N]-\tilde Z[\bar L])=\mathcal O _{Z^L\cap G^N}(M-G[\bar N]-Z[\bar L]).$$ 
In particular $H^i(\mathcal O _{\tilde Z^L\cap \tilde G^N}(\tilde M-\tilde G[\bar N]-\tilde Z[\bar L]-s(f\circ g)^*L))=0$ for $i\leq d-1$ and $s>0$.

By \cite{KollarDirectImages2}, it follows that
\begin{multline*}R^\bullet \tilde f_*\mathcal O_ {\tilde Z^L\cap \tilde G^N}(\tilde M-\tilde G[\bar N]-\tilde Z[\bar L])=
\sum R^i\tilde f_*\mathcal O_ {\tilde Z^L\cap \tilde G^N}(\tilde M-\tilde G[\bar N]-\tilde Z[\bar L])[-i]\end{multline*} and so 
$H^i(R^j\tilde f_*\mathcal O_ {\tilde Z^L\cap \tilde G^N}(\tilde M-\tilde G[\bar N]-\tilde Z[\bar L])-s(f\circ g)^*L)=0$ for $i+j\leq d-1$.
 
We will now prove the required vanishing for $i+j= d$, $k>d$ 
  and $s>0$.
Note that in this case we have a short exact sequence
$$0\to \mathcal O _{Z^L\cap G^N}(M-G[\bar N]-Z[\bar L])\to \mathcal O _{Z^L\cap G^N}(M+E_{=d}-G[\bar N]-Z[\bar L])$$ $$\to \mathcal O _{E_{=d}\cap Z^L\cap G^N}(M+E_{=d}-G[\bar N]-Z[\bar L])  \to 0.$$
By \eqref{l-2} we have $H^i(\mathcal O _{Z^L\cap G^N}(M+E_{=d}-G[\bar N]-Z[\bar L])-sf^*L))=0$ for $i\leq d$ and $s>0$ and $H^i(\mathcal O _{E_{=d}\cap Z^L\cap G^N}(M+E_{=d}-G[\bar N]-Z[\bar L])-sf^*L))=0$ for $i\leq d-1$ and $s>0$. 
It follows that $H^i(\mathcal O _{Z^L\cap G^N}(M-G[\bar N]-Z[\bar L])-sf^*L))=0$ for $i\leq d$ and $s>0$.
The required vanishing now follows from the proof of the previous case $i+j\leq d-1$.

Finally, we will now prove the required vanishing for $i+j= d$, $k=d$ 
  and $s>0$.
It suffices to show that $H^{d-j}(R^jf_*\OO _{Z}(A-sf^*L))=0$ for $j>0$ and $s>0$.
Following the above arguments, it suffices to check that if $(\tilde G[\bar N]+\tilde Z[\bar L])|_{\tilde Z^L\cap \tilde G^N}\ne 0$, then $$H^{d}(\mathcal O _{\tilde Z^L\cap \tilde G^N}(\tilde M-\tilde G[\bar N]-\tilde Z[\bar L]-s(f\circ g)^*L))=0\qquad {\rm for} \ s>0$$ and if $(\tilde G[\bar N]+Z\tilde [\bar L])|_{\tilde Z^L\cap \tilde G^N}=\emptyset $, then  
$$H^{d}(\mathcal O _{\tilde Z^L\cap \tilde G^N}(\tilde M-s(f\circ g)^*L))\cong H^d(\tilde f_* \mathcal O _{\tilde Z^L\cap \tilde G^N}(\tilde M-s(f\circ g)^*L))\qquad {\rm for} \ s>0.$$ Note that as $Z^L\cap  G^N\cap( G[\bar N]+ Z[\bar L])$ is a union of non-klt centers, it is seminormal cf. \cite[9.1]{Fuj10}.
Similarly to what we have seen above, we have $$R^\bullet g_* \mathcal O _{\tilde Z^L\cap \tilde G^N\cap(\tilde G[\bar N]+\tilde Z[\bar L])}(\tilde M)=\mathcal O _{Z^L\cap  G^N\cap( G[\bar N]+ Z[\bar L])}( M)$$ and so
$$R^\bullet g_* \mathcal O _{\tilde Z^L\cap \tilde G^N}(\tilde M-\tilde G[\bar N]-\tilde Z[\bar L])=\mathcal O _{Z^L\cap  G^N}(M- G[\bar N]- Z[\bar L]).$$
We also have $R^\bullet g_* \mathcal O _{\tilde Z^L\cap \tilde G^N}(-\tilde G[\bar N]-\tilde Z[\bar L])=\mathcal O _{Z^L\cap  G^N}(- G[\bar N]- Z[\bar L]).$
Since $M=A$, by \eqref{l-0} we have 
\begin{multline*}H^{d}(\mathcal O _{\tilde Z^L\cap \tilde G^N}(\tilde M-\tilde G[\bar N]-\tilde Z[\bar L]-s(f\circ g)^*L))=H^{d}(\mathcal O _{Z^L\cap G^N}(M-G[\bar N]-Z[\bar L]-sf^*L))\\
^{\eqref{l-0}}\cong H^{d}(\mathcal O _{Z^L\cap G^N}(-G[\bar N]-Z[\bar L]-sf^*L))\cong H^{d}(\mathcal O _{\tilde Z^L\cap \tilde G^N}(-\tilde G[\bar N]-\tilde Z[\bar L]-s(f\circ g)^*L)).\end{multline*}

Let $f=\dim (\tilde Z^L\cap \tilde G^N)-\dim \tilde V$.  We
have \begin{multline*}H^{d}(\mathcal O _{\tilde Z^L\cap \tilde
    G^N}(-\tilde G[\bar N]-\tilde Z[\bar L]-s(f\circ
  g)^*L))\\^{\rm (Serre\ duality)}\cong H^f(\omega _{\tilde Z^L\cap \tilde G^N}(\tilde G[\bar N]+\tilde Z[\bar L]+s(f\circ g)^*L))^\vee\\
  ^{\eqref{t-tor}}\cong H^0(R^f\tilde f_* \omega _{\tilde Z^L\cap
    \tilde G^N}(\tilde G[\bar N]+\tilde Z[\bar L])\otimes \OO _{\tilde
    V}(sh^*L))^\vee\end{multline*} If $(\tilde G[\bar N]+\tilde Z[\bar
L])|_{\tilde Z^L\cap \tilde G^N}\ne 0$, then $R^f\tilde f_* \omega
_{\tilde Z^L\cap \tilde G^N}(\tilde G[\bar N]+\tilde Z[\bar L])$ is
torsion free and generically $0$ and hence vanishes
(cf. \eqref{t-tor}).  If, on the other hand, $(\tilde G[\bar N]+\tilde
Z[\bar L])|_{\tilde Z^L\cap \tilde G^N}=\emptyset $, then $R^f\tilde
f_* \omega _{\tilde Z^L\cap \tilde G^N}\cong \omega _{\tilde V}$
(cf. \cite{KollarDirectImages}).  By Serre duality and \eqref{t-tor}, we
have
\begin{multline*} H^0( \omega _{\tilde V}(sh^*L))^\vee 
\cong ^{\rm S.D.}H^d(\OO _{\tilde V}(-sh^*L))\cong ^{\eqref{t-tor}} H^d(\tilde f_* \mathcal O _{\tilde Z^L\cap \tilde G^N}(-s(f\circ g)^*L)).\end{multline*}\end{proof}

\subsection{The structure of the sheaves $R^if_*\cO_Y$}
\label{sec:structure}

By the Kawamata-Viehweg vanishing theorem, we have that
$R^if_*\cO_Y(A-E)=0$ for $i>0$ since $A-E = K_Y +\Delta - f^*(K_X+B)$.
We will now build up the sheaves $R^if_*\cO_Y\cong R^if_*\cO_Y(A)$, going from $R^if_*\cO_Y(A-E)$ to
$R^if_*\cO_Y(A)$ by adding the parts $E_{=l}$ defined in
Section~\ref{sec:horizontal-resolution} one by one.

\begin{proof}[Proof of \eqref{thm:main}]
By \cite{HX11}, we may assume that $(X,B)$ is projective. Adding a sufficiently ample divisor to $B$ we may assume that $K_X+B$ is ample, and so we may assume that $m(K_X+B)\sim H$ is a general very ample divisor (for some integer $m>0$).
Let $\nu :X'\to X$ be the corresponding normal cyclic cover (cf. \cite[5.20]{KollarMoriBook}) and $K_{X'}+B '=\nu ^*(K_X+B)$. 
Then $(X',B')$ is log canonical, $K_{X'}+B'$ is Cartier and the non-klt centers are given by the inverse images of the non-klt centers of $(X,B)$ cf. \cite[5.20]{KollarMoriBook}.
Note that if $Y'=Y\times _X X'$, then $f':Y'\to X'$ is a resolution and $\mu :Y'\to Y$ is a finite map so that 
$R^i\mu _* \OO _{Y'}=0$ for $i>0$ and $\OO _Y$ is a direct summand of $\mu _*\OO _{Y'}$. 
Thus it is easy to see that if $X'$ is $C_{d+2}$ then so is $X$ (similarly if $R^if'_*\OO _{Y'}$ is $C_{d+1-i}$ for $i>0$, then so is $R^if_*\OO _Y$).
Thus, replacing $(X,B)$ by $(X',B')$, we may assume that $K_X+B$ is Cartier. 

 Recall that by \eqref{thm:remove-A-always}, we have $R^jf_* \mathcal O _Y\cong R^jf_* \mathcal O _Y(A)$. For $\dim X -1\geq k \geq d$, consider the short exact sequences
$$0\to R^jf_*\mathcal O_Y(A-E+E_{\geq k+1})\to R^jf_*\mathcal O_Y(A-E+E_{\geq k})\to R^jf_*\mathcal O_{E=k}(A-E+E_{\geq k})\to 0.$$ (Note that these sequences are exact by \eqref{t-tor}; moreover $E_{\geq \dim X-1}=0$.)
By \eqref{l-3} we have $H^i(R^jf_*\mathcal O_{E=k}(A-E+E_{\geq k}- sf^*L))=0$ for $i+j\leq d$, $j>0$ and $s>0$.
Since $R^jf_*\mathcal O_Y(A-E)=0$ for $j>0$, it follows that 
$0=H^i( R^jf_*\mathcal O_Y(A-E- sf^*L)))\to H^i(R^jf_*\mathcal O_Y(A- sf^*L)))$ is surjective for $i+j\leq d$ and $j>0$ and so $R^jf_*\mathcal O_Y\cong R^jf_*\mathcal O_Y(A)$ is $C_{d-j+1}$ for $j>0$. By \eqref{lem:leray}, $X$ is $C_{d+2}$.
\end{proof}

\bibliographystyle{amsalpha}

\def\cprime{$'$} \def\cprime{$'$}
\providecommand{\bysame}{\leavevmode\hbox to3em{\hrulefill}\thinspace}
\providecommand{\MR}{\relax\ifhmode\unskip\space\fi MR }
\providecommand{\MRhref}[2]{%
  \href{http://www.ams.org/mathscinet-getitem?mr=#1}{#2}
}
\providecommand{\href}[2]{#2}

\end{document}